\theoremstyle{plain}
\newtheorem{theorem}[subsection]{Theorem}
\newtheorem{lemma}[subsection]{Lemma}
\newtheorem{proposition}[subsection]{Proposition}
\newtheorem{cor}[subsection]{Corollary}
\theoremstyle{definition}
\newtheorem{remark}[subsection]{Remark}
\newtheorem{definition}[subsection]{Definition}
\newtheorem{example}[subsection]{Example}
\newtheorem{examples}[subsection]{Examples}
\newcommand{\PN}{\put(20,0){\line(-1,1){20}}\qbezier(0,0)(4,4)(8,8)\qbezier(20,20)(16,16)(12,12)}
\def\Div{\hbox{Div}}
\def\deg{\hbox{deg}}
\def\Rev{\hbox{Rev}}
\def\supp{\hbox{supp}}
\def\e-supp{\hbox{e-supp}}
\def\inn{\hbox{inn}}
\begin{document}
\title [Simple braids]
{Simple braids }
\thanks{\emph{Keywords and phrases:} positive braids, square free braids, conjugation classes of simple braids}
\thanks{This research is partially supported by Higher Education Commission, Pakistan.\\
2010 AMS classification: Primary 20F36, 57M25; Secondary 57M27, 05A05 .}
\author{  REHANA ASHRAF$^{1}$,\,\,BARBU BERCEANU$^{1,2}$}

\address{$^{1}$Abdus Salam School of Mathematical Sciences,
 GC University, Lahore-Pakistan.}
\email {rashraf@sms.edu.pk}
\address{$^{2}$
 Institute of Mathematics Simion Stoilow, Bucharest-Romania (permanent address).}
\email {Barbu.Berceanu@imar.ro}
  \maketitle
 \pagestyle{myheadings} \markboth{\centerline {\scriptsize
 REHANA ASHRAF,\,\,\,BARBU BERCEANU  }} {\centerline {\scriptsize
 Simple braids}}
\begin{abstract}
We study a subset of square free positive braids and we give a few algebraic characterizations of them and one geometric characterization: the set of positive braids whose closures are unlinks. We describe canonical  forms of these braids and of their conjugacy classes.\end{abstract}
\section{\textbf{Introduction}}
Artin braid group $\mathcal{B}_{n}$ \cite{Artin:47}, the geometrical analogue    of the symmetric group
 $\Sigma_{n}$, is a central object of  study, connected with various mathematical domains. See \cite{Bir:74}, \cite{M},  \cite{Turaev:Kassel}, and also \cite{Paris}  for a recent survey. Garside found a new solution of the word problem and solved the conjugacy problem in $\mathcal{B}_{n}$, using the \emph{braid monoid} $\mathcal{MB}_{n}$ of positive braids \cite{Garside:69}: this is generated by the positive braids $x_{i}$ ($i=1,\ldots,n-1$)

\begin{center}
\begin{picture}(130,50)
\thicklines
\multiput(15,10)(30,0){2}{\line(0,1){20}}
\put(70,10){\PN}
\multiput(115,10)(30,0){2}{\line(0,1){20}}
\put(-10,15){$x_{i}$}
\put(14,35){\tiny $1$}\put(34,35){\tiny$i-1$}\put(67,36){\tiny$i$}\put(83,36){\tiny$i+1$}
\put(110,35){\tiny$i+2$}\put(145,35){\tiny$n$}\multiput(25,17)(100,0){2}{$\cdots$}
\end{picture}
\end{center}
and has Artin defining relations $x_{i}x_{j}=x_{j}x_{i}$ if $|i-j|\neq1$ and $x_{i+1}x_{i}x_{i+1}=x_{i}x_{i+1}x_{i}$.
 The \emph{Garside braid}
$\Delta_{n}=x_{1}(x_{2}x_{1})\ldots (x_{n-1}x_{n-2}\ldots x_{2}x_{1})$ plays a central role: for instance, $\Delta_{n}x_{i}\Delta_{n}^{-1}=x_{n-i}$, and the next four sets of positive braids coincide: divisors of $\Delta_{n}\,(\alpha\,|\,\Delta_{n})$, $\Div(\Delta_{n})=\{\alpha\in \mathcal{MB}_{n}\,|\,\hbox{there exist}\, \delta,\varepsilon\in \mathcal{MB}_{n},\Delta_{n}=\delta\alpha\varepsilon\}$, left divisors of $\Delta_{n},\,(\alpha\,|_{L}\Delta_{n})$, $\Div_{L}(\Delta_{n})=\{\alpha\in \mathcal{MB}_{n}\,|\,\hbox{there exists}\,\varepsilon\in \mathcal{MB}_{n},\Delta_{n}=\alpha\varepsilon\}$,
right divisors of $\Delta_{n}\,(\alpha\,|_{R}\Delta_{n})$ $\Div_{R}(\Delta_{n})=\{\alpha\in \mathcal{MB}_{n}\,|\,\hbox{there exists}\,\delta\in \mathcal{MB}_{n},\Delta_{n}=\delta\alpha\}$, and the set of the square free elements in $\mathcal{MB}_{n}$ ($\alpha \in \mathcal{MB}_{n}$ is square free if there is no generator $x_{i}$ such that $x_{i}^{2}|\alpha$, equivalently if any positive presentation of $\beta$ has no exponent greater than one). Also conjugation  of positive braids in $\mathcal{B}_{n}$ is equivalent with conjugation in $\mathcal{MB}_{n}$ ($\alpha\delta=\delta\beta$ for some positive braid $\delta$) and this can be reduced to a sequence of conjugation with $\delta$ in $\Div(\Delta_{n})$ (see~\cite{Garside:69},~\cite{Bir:74}).

Computing polynomial invariants (Alexander-Conway, Jones, and also $D$) of closed braids we found Fibonacci type recurrences which reduce computations to a new class of square free positive braids (see~\cite{Barbu-Nizami:09},~\cite{Barbu:Rehana}). First we define five sets of positive braids: the set $\mathcal{LSB}_{n}$
 of literally simple braids, the set $\mathcal{CSB}_{n}$ of conjugate simple braids, the invariant simple set $\mathcal{ISB}_{n}$, the set $\mathcal{MSB}_{n}$ of Markov simple braids, the set of $\mathcal{GSB}_{n}$ of geometrically simple braids.

 \begin{definition}Let $\mathcal{MF}_{n-1}$ be the free monoid generated by $x_{1},x_{2},\ldots,x_{n-1}$. An element
$\omega \in \mathcal{MF}_{n-1},\,\omega=x_{i_{1}}x_{i_{2}}\ldots x_{i_{k}}$ is called a \emph{simple word} if $i_{a}\neq i_{b}$ for  $a\neq b$. A positive braid $\alpha \in \mathcal{MB}_{n}$ is called a \emph{literally simple braid} if under the natural projection $\pi :\mathcal{MF}_{n-1}\longrightarrow \mathcal{MB}_{n} $ there exists a simple word $\omega$ such that $\pi(\omega)=\alpha.$
\end{definition}

\begin{definition}
\label{conj:sq free} A positive braid $\beta$ is said to be a
\emph{conjugate simple braid} if all positive braids $\beta'$
conjugate to $\beta$ are square free.
\end{definition}

\begin{examples} 1) $x_{2}x_{1}x_{3}x_{2}x_{1}x_{3}$ is a square free word and also a square free  braid.\\
\indent 2) $x_{3}x_{2}x_{1}x_{3}x_{2}x_{1}$ is a square free word but not a square free braid   because \\ $x_{3}x_{2}x_{1}x_{3}x_{2}x_{1}=x_{2}x_{1}x_{3}x_{2}x_{1}^{2}$.\\
\indent 3) $\beta=x_{1}x_{2}x_{1}$ is a square free braid (it has only two positive
presentations:  $x_{1}x_{2}x_{1}$ and $x_{2}x_{1}x_{2}$), but is neither a simple braid nor a
conjugate simple braid (because $\beta\thicksim x_{1}^{2}x_{2}).$
\end{examples}
We say that a subset $A\subset\mathcal{MB}_{n}$ is \emph{invariant under conjugation} if $(\bigcup\limits_{\alpha\in\mathcal{B}_{n}}\alpha A \alpha^{-1})$\\
$\bigcap\mathcal{MB}_{n} \subset A$. For instance, in $\mathcal{MB}_{n},\,A=\{1,x_{1},\ldots,x_{n-1}\}$ is invariant under conjugation but $B=\{1,x_{1},\ldots,x_{n-2}\}$ is not.

\begin{definition}The \emph{invariant simple set} is the largest subset of $\Div(\Delta_{n})$ invariant under conjugation: $\mathcal{ISB}_{n}=\bigcup\{A\subset \Div(\Delta_{n})\,|\,A\,\hbox{is invariant under conjugation}\}.$
\end{definition}

\begin{definition}
\label{markov:sqfree}
 A positive braid is said to be \emph{Markov simple braid} if any positive
 braid $\beta'$ obtained from $\beta$ by a finite sequence of positive braids $\beta=\beta_{1},\beta_{2},\ldots,\beta_{s}=\beta'$ of moves
 $MI$ and $MII_{+}$ is square free.  Here $MI$ and $MII_{+}$ are classical Markov moves (see \cite{Bir:74}):\\
 \indent $MI: \beta_{i}\rightarrow
 \beta_{i+1},$ where the two braids are conjugate in the same
 $\mathcal{B}_{n};\\
 \indent  MII_{+}: \beta_{i} \in \mathcal{B}_{n+1}, \beta_{i+1} \in
  \mathcal{B}_{n}$ and $\beta_{i}=\beta_{i+1}x_{n}$ or $\beta_{i}\in\mathcal{B}_{n},\,\beta_{i+1}\in\mathcal{B}_{n+1}$
and $\beta_{i+1}=\beta_{i}x_{n}.$
\end{definition}
\noindent The last definition is geometrical, we are looking at the positive part of the "kernel" of the closure map  $C:\coprod\limits_{n}\mathcal{MB}_{n}\longrightarrow Links$:
\begin{definition}A positive braid $\beta$ is said to be a \emph{geometrically simple braid} if its closure $\widehat{\beta}$ is a trivial link.\end{definition}
Each of these sets are studied  in separate sections. Our aim is  to show that all  these notions coincide:
\begin{theorem}\label{main: th}$\mathcal{LSB}_{n}=\mathcal{CSB}_{n}=\mathcal{ISB}_{n}=\mathcal{MSB}_{n}=\mathcal{GSB}_{n}.$
\end{theorem}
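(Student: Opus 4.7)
My plan is to establish the five-way equality via two interlocking chains of inclusions that meet at $\mathcal{CSB}_n$:
\[
\mathcal{LSB}_n \subseteq \mathcal{MSB}_n \subseteq \mathcal{CSB}_n = \mathcal{ISB}_n \subseteq \mathcal{LSB}_n
\qquad\text{and}\qquad
\mathcal{LSB}_n \subseteq \mathcal{GSB}_n \subseteq \mathcal{CSB}_n.
\]
The first chain closes up to give $\mathcal{LSB}_n = \mathcal{MSB}_n = \mathcal{CSB}_n = \mathcal{ISB}_n$, and the second then forces $\mathcal{GSB}_n$ to equal this common value.

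Two of the inclusions are essentially formal. The equality $\mathcal{CSB}_n = \mathcal{ISB}_n$ follows directly from the identification, recalled in the introduction, of $\Div(\Delta_n)$ with the square-free elements of $\mathcal{MB}_n$: the positive conjugacy class of any $\alpha \in \mathcal{CSB}_n$ is an invariant subset of $\Div(\Delta_n)$, hence lies inside the maximal such set $\mathcal{ISB}_n$, and the reverse inclusion is immediate. The inclusion $\mathcal{MSB}_n \subseteq \mathcal{CSB}_n$ is free because $MI$ is literally positive conjugation inside a fixed $\mathcal{B}_n$.

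The substantive but still accessible inclusions occupy the middle of the proof. For $\mathcal{LSB}_n \subseteq \mathcal{MSB}_n$ I would check that literal simpleness is preserved by each Markov move: $MII_+$ is immediate, while for $MI$ I would reduce positive conjugation to Garside-style iterated cycling $\gamma_1\gamma_2 \mapsto \gamma_2\gamma_1$ and then use the boolean character of the interval $[e,\pi(\alpha)]$ in weak order, so that every left divisor of a literally simple braid is itself literally simple on a subset of the letters and cycling merely reorders them. For $\mathcal{LSB}_n \subseteq \mathcal{GSB}_n$ I would induct on $n$: if the simple word misses some $x_m$ the braid splits and the closure is a split union of trivial links; otherwise $x_{n-1}$ appears exactly once, so a single $MII$-destabilization (after cycling $x_{n-1}$ to the end) identifies the closure with that of a literally simple braid in $\mathcal{B}_{n-1}$, trivial by induction. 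For $\mathcal{GSB}_n \subseteq \mathcal{CSB}_n$ I would use that positive conjugation preserves the closure, together with the lemma that a positive braid whose closure is a trivial link is square-free; this lemma I would extract from Bennequin's theorem that the Seifert surface built from the positive braid diagram realizes the minimal genus of the closure, since the Euler-characteristic identity $\chi = n - c$ combined with the vanishing genus of an unlink constrains the length $c$ tightly enough to force the positive presentation to be reduced in the Coxeter sense.

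The genuinely hard inclusion is $\mathcal{ISB}_n \subseteq \mathcal{LSB}_n$, and this is where I expect the bulk of the effort to go. Arguing contrapositively, given $\alpha \in \Div(\Delta_n)$ that is not literally simple, I need to produce a positive conjugate containing some $x_i^2$. My plan is to pick a square-free positive presentation $\alpha = x_{i_1}\cdots x_{i_c}$ and a generator $x_i$ occurring at positions $a < b$ with $b-a$ minimal, optimized over all square-free presentations of all positive conjugates of $\alpha$. Square-freeness forces at least one $x_{i-1}$ or $x_{i+1}$ in the intermediate block, while minimality together with interior commutation forces this block to reduce to a single $x_{i\pm1}$, exposing the local pattern $x_i \cdot x_{i\pm1} \cdot x_i$. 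The braid relation $x_i x_{i\pm1} x_i = x_{i\pm1} x_i x_{i\pm1}$ then transfers the repetition from index $i$ to index $i\pm 1$, and a further cycling by a suitable prefix brings two equal letters into adjacent positions, producing the required $x_j^2$. The principal obstacle is to show that this transfer-and-cycle procedure terminates; I expect to use a lexicographic monovariant on $(n, b-a, c)$, combined with a separate inductive treatment when the repetition has migrated to the extreme generators $x_1$ or $x_{n-1}$, where a reduction to the lower-rank braid monoid applies.
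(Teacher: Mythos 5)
Your architecture is mostly sound, and your second chain in fact contains a genuinely different route to the hardest equality, but as written the argument has one unjustified step: the inclusion $\mathcal{LSB}_n\subseteq\mathcal{MSB}_n$. You propose to handle the $MI$ moves by ``reducing positive conjugation to Garside-style iterated cycling $\gamma_1\gamma_2\mapsto\gamma_2\gamma_1$.'' It is true that a swap of a literally simple braid is again literally simple (both factors divide it, hence are literally simple with disjoint letter sets), but the reduction itself is not a quotable fact: Garside reduces conjugacy of positive braids to a chain of conjugations by divisors of $\Delta_n$, and a conjugation $\alpha\mapsto\delta^{-1}\alpha\delta$ by a simple element $\delta$ is not in general a swap $uv\mapsto vu$ of a positive factorization of $\alpha$, since $\delta$ need not be a left divisor of $\alpha$. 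Showing that literal simplicity survives an arbitrary positive conjugation $\alpha\beta=\beta\delta$ is exactly where the paper spends most of its effort (the ``going up and down'' and ``back and forth'' Lemmas, Proposition \ref{a and b}, and the divisibility results of the Appendix); it cannot be dispatched as a reordering of letters.

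The rest of your proposal repairs this, provided you reorder the logic. Your second chain $\mathcal{LSB}_n\subseteq\mathcal{GSB}_n\subseteq\mathcal{CSB}_n$ --- destabilization induction for the first inclusion, and Bennequin's theorem ($\chi(\widehat{\beta})=n-c$ for a positive $n$-braid of word length $c$, so a trivial $k$-component closure forces $c=n-k$, and any occurrence of $x_i^2$ in a positive conjugate would express the underlying permutation as a product of $n-k-2$ transpositions) for the second --- combined with your contrapositive argument for $\mathcal{CSB}_n=\mathcal{ISB}_n\subseteq\mathcal{LSB}_n$ already yields $\mathcal{LSB}_n=\mathcal{CSB}_n=\mathcal{ISB}_n=\mathcal{GSB}_n$ without ever invoking the cycling claim; once $\mathcal{LSB}_n=\mathcal{CSB}_n$ is known, $\mathcal{LSB}_n\subseteq\mathcal{MSB}_n$ follows as in the paper ($MI$ preserves conjugate simplicity, $MII_+$ preserves literal simplicity, and the two classes coincide), while $\mathcal{MSB}_n\subseteq\mathcal{CSB}_n$ is formal. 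So derive $\mathcal{MSB}_n$ last, not first. Note that this Bennequin route is genuinely different from the paper's, which proves $\mathcal{LSB}_n\subseteq\mathcal{CSB}_n$ by a long direct induction and handles $\mathcal{GSB}_n$ via a bespoke HOMFLY specialization $D$ and a triple induction; your version is shorter but imports a substantial external theorem, and it still leaves the combinatorial core --- the square-producing case analysis behind $\mathcal{CSB}_n\subseteq\mathcal{LSB}_n$, the paper's Lemma \ref{crit:conj sq fre} --- as a sketch whose termination argument would need to be written out in full.
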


Now we call \emph{simple braids} elements of this unique set $\mathcal{SB}_{n}$. We also consider the group  $\mathcal{SB}_{\infty}=\bigcup\limits_{n\geq 1}\mathcal{SB}_{n}$ and the set $\mathcal{SB}=\coprod\limits_{n\geq 1}\mathcal{SB}_{n}$ (for Markov moves  and closure of braids, it is necessary to know the number of strands of a braid).\\
\indent We will also give  canonical forms for simple braids and their conjugacy classes. Here "canonical forms" of $\beta$ has a precise meaning: in the set of words in the free monoid $\mathcal{MF}_{n-1}$ representing the element $\beta\in\mathcal{MB}_{n}$, this is called the diagram of $\beta$ in \cite{Garside:69}, \cite{Bir:74}, we always choose the minimal one in the length-lexicographic order  given by $x_{1}<x_{2}<\ldots <x_{n-1}$, and similarly for the set of words representing a conjugacy class in $\mathcal{MB}_{n}$. For instance, the canonical form of divisor of $\Delta_{n}$ is given by
$$\beta_{K,J}=\beta_{k_{1},j_{1}}\beta_{k_{2},j_{2}}\ldots
\beta_{k_{s},j_{s}}\,\,\,\,\,\,\,\,\,\,\,\,\,\,\,(*)$$
where  $\beta_{k,j}=x_{k}x_{{k}-1}\ldots x_{j}\,\,(j\leq k)$, the sequence  $K=(k_{1},\ldots,k_{s})$ is increasing, and the sequence $J=(j_{1},j_{2},\ldots,j_{s})$ satisfies $j_{h}\leq k_{h}$ ($h=1,\ldots,s$); this is a consequence of the form of G\"{o}bner basis for $\mathcal{MB}_{n}$,  see~\cite{Boku},~\cite{Zaffar},~\cite{Usman},~\cite{Barbu:usman} for related results and~\cite{Barbu},~\cite{Barbu:usman} for more details and the proof of $(*)$.

We have a decomposition theorem, similar to the decomposition of permutations (see section 2 for definitions of braid cycles, disjoint cycles, and their partial order):
\begin{theorem}\label{|unique representation of cycle} Every simple braid $\alpha \in \mathcal{SB}_{n} $ can be written in a unique way as a product of  disjoint cycles  $\alpha=\gamma_{1}\gamma_{2}\ldots\gamma _{r}$, where $\gamma_{1}\prec\gamma_{2}\prec\ldots \prec\gamma_{r}$.
\end{theorem}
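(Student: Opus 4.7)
The plan is to extract the decomposition directly from the canonical form $(*)$ for elements of $\Div(\Delta_n)$. By Theorem~\ref{main: th}, a simple braid $\alpha \in \mathcal{SB}_n$ belongs to $\mathcal{ISB}_n \subset \Div(\Delta_n)$ and is also literally simple, so it has a unique length--lexicographically minimal presentation
\[\alpha = \beta_{k_1,j_1}\beta_{k_2,j_2}\cdots\beta_{k_s,j_s}\]
with $k_1 < k_2 < \cdots < k_s$ and $j_h \le k_h$. I expect the braid cycles of section~2 to be precisely these factors $\beta_{k,j} = x_k x_{k-1} \cdots x_j$, whose support is the interval $\{j, j+1, \ldots, k\}$, with two cycles disjoint exactly when their supports are.

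The main technical step is to show that, for $\alpha$ simple, the supports of the consecutive factors in $(*)$ are pairwise disjoint. Since $\alpha$ is literally simple, its canonical word contains each generator $x_i$ at most once, while $x_i$ occurs exactly once for each index $h$ with $i \in [j_h, k_h]$; hence each $i$ lies in at most one such interval. Combined with $k_1 < \cdots < k_s$, this forces $k_h < j_{h+1}$, so the supports are indeed pairwise disjoint. Once disjointness is established, the factors $\beta_{k_h,j_h}$ pairwise commute (by repeated application of $x_a x_b = x_b x_a$ for $|a - b| \ge 2$), and the left-to-right order in $(*)$ coincides with the order $\gamma_1 \prec \gamma_2 \prec \cdots \prec \gamma_s$ in the statement, for the natural definition of $\prec$ on disjoint cycles (comparing maxima, or equivalently minima, of supports). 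This yields existence with $r = s$ and $\gamma_h = \beta_{k_h,j_h}$.

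For uniqueness I would take any other decomposition $\alpha = \delta_1 \delta_2 \cdots \delta_t$ into disjoint cycles with $\delta_1 \prec \cdots \prec \delta_t$, write each $\delta_h = \beta_{k'_h, j'_h}$ in its descending-product form, and observe that the resulting word has the shape $(*)$; the uniqueness of the length--lex canonical form then forces term-by-term equality, giving $t = s$ and $\delta_h = \gamma_h$. The principal obstacle is the disjointness argument itself, which genuinely uses the \emph{literal} simplicity of $\alpha$: square-freeness of the single canonical word is not by itself enough without also knowing that that word exhibits every generator at most once, and it is at this point that the equivalence $\mathcal{SB}_n = \mathcal{LSB}_n$ from Theorem~\ref{main: th} does real work. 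Once disjointness is in hand, commutativity, ordering, and uniqueness all reduce to bookkeeping built on top of the canonical-form machinery already available for $\Div(\Delta_n)$.
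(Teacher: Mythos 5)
Your opening steps match the paper's: reduce to a literally simple braid via Theorem \ref{main: th}, take the canonical form $(*)$, and observe that literal simplicity forces $j_{h+1}>k_{h}$. From there, however, the argument goes astray because you have misread the definitions of \emph{cycle} and \emph{disjoint} from Section 2. A cycle is not a single block $\beta_{k,j}$: it is a product of $U$ and $D$ factors with \emph{consecutive} supports (e.g.\ $D(2,1)U(3,4)$ is one cycle, with support $[1,4]$), and two cycles with supports $[a,b]$ and $[c,d]$ are called \emph{disjoint} only when $c\geq b+2$; set-theoretic disjointness of supports is not enough. The inequality $j_{h+1}>k_{h}$ only gives $j_{h+1}\geq k_{h}+1$, so adjacent blocks may have touching supports; in that case they are neither disjoint cycles nor commuting factors (since $x_{k_{h}}$ and $x_{k_{h}+1}$ do not commute), so both your claimed decomposition and your commutativity assertion fail. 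Concretely, $\alpha=x_{2}x_{1}x_{3}x_{4}$ has canonical form $\beta_{2,1}\beta_{3,3}\beta_{4,4}$, three blocks with pairwise non-overlapping but touching supports; the decomposition asserted by the theorem is the single cycle $D(2,1)U(3,4)$, not a product of three cycles.

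The missing step, which is the substance of the paper's Proposition \ref{| prop:unique representation of cycle}, is the regrouping: convert each block to $D(k,j)$ or $U(k,k)$, collect runs of consecutive $U$'s into a single $U$, and merge maximal runs of factors with consecutive supports into cycles; only then does one obtain a product $\gamma_{1}\prec\gamma_{2}\prec\ldots\prec\gamma_{r}$ of genuinely disjoint cycles, with $r$ possibly smaller than $s$. Uniqueness then follows, but not from uniqueness of the length-lexicographic normal form as you propose (a single cycle can occupy several $\beta_{k,j}$ blocks, so your step writing each $\delta_{h}$ as one $\beta_{k'_{h},j'_{h}}$ does not apply to general cycles); it follows because the supports of the $\gamma_{i}$ must be exactly the connected components of $\supp(\alpha)$ in increasing order, which pins down each $\gamma_{i}$ as the corresponding subword of $\alpha$.
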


For the conjugacy classes of elements in $\mathcal{SB}_{n}$ or $\mathcal{SB}_{\infty}$ we have

\begin{theorem}\textbf{(Canonical form of conjugacy class)}\label{canonical:conj sq fre} a) A  simple braid $\beta\in \mathcal{SB}_{\infty}$ is conjugate to the braid
 $$\beta_{A}=(x_{1}x_{2}\ldots x_{s_{1}-1})(x_{s_{1}+1}\ldots x_{s_{2}-1})\ldots (x_{s_{r-1}+1}\ldots x_{s_{r}-1})$$ where $A=(a_{1},a_{2},\ldots,a_{r})$
is a sequence of integers satisfying
$a_{1}\geq a_{2}\geq\ldots \geq a_{r}\geq 2$ and $s_{i}=a_{1}+a_{2}+\ldots +a_{i}.$\\
b) If $\beta_{A}\sim \beta_{A'}$ where $\beta_{A}$ and $\beta_{A'}$
are as in part a), then $A=A'$.
\end{theorem}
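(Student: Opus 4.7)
The plan is to use the fact, which I would first establish as a rigidity lemma, that the projection $\pi:\mathcal{MB}_n\to\Sigma_n$ restricts to an injection on $\mathcal{SB}_n$. This follows from the Garside bijection between $\Div(\Delta_n)$ and $\Sigma_n$ together with the inclusion $\mathcal{SB}_n \subset \Div(\Delta_n)$: every simple braid is, in particular, a square-free divisor of $\Delta_n$, and the restriction of the Garside bijection to this subset remains injective.

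Granting this rigidity, part (a) goes through directly. From $\pi(\beta)$ I read off the multiset of non-trivial cycle lengths and sort them decreasingly as $a_1 \geq \cdots \geq a_r \geq 2$; this is the $A$ to aim for. The permutation $\pi(\beta_A)$ has the same cycle type as $\pi(\beta)$ by construction, so there is a $\sigma\in\Sigma_\infty$ with $\sigma\pi(\beta)\sigma^{-1}=\pi(\beta_A)$. Any positive-braid lift $\tau$ of $\sigma$ gives a braid $\tau\beta\tau^{-1}$ which is again simple by Theorem \ref{main: th} (simple braids are closed under conjugation) and which has the same permutation as $\beta_A$, so rigidity forces $\tau\beta\tau^{-1}=\beta_A$, giving $\beta\sim\beta_A$.

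Part (b) is then immediate: braid-conjugation induces permutation-conjugation in $\Sigma_\infty$, so the cycle type of $\pi(\beta_A)$ (with $1$-cycles ignored) is a conjugacy invariant. That cycle type is the multiset $\{a_1,\ldots,a_r\}$, and $A$ is by construction its weakly decreasing rearrangement, so $\beta_A\sim\beta_{A'}$ forces $A=A'$.

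The main obstacle is establishing the rigidity lemma cleanly inside the paper's framework. An alternative route that avoids an explicit appeal to Garside's bijection is to start from the unique disjoint-cycle decomposition of Theorem \ref{|unique representation of cycle}, reorder by decreasing cycle size (possible since disjoint cycles commute), normalize each cycle to its increasing form $x_{m_i}x_{m_i+1}\cdots x_{m_i+a_i-2}$ by a local $\Delta$-conjugation on its strand block (which has no effect on the other blocks, because the blocks are generator-separated), and finish by conjugating with a single global strand-permutation braid whose underlying permutation moves the blocks, in order of decreasing size, onto the canonical target blocks $[s_{i-1}+1,s_i]$. Either route ultimately reduces to the same point: one must know that once permutations match, the two simple braids themselves match.
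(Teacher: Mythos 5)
Your part (b) is correct and is actually simpler than the paper's argument: since $\pi$ is a homomorphism, conjugate braids have conjugate permutations, and the multiset of cycle lengths $\geq 2$ of $\pi(\beta_{A})$ is exactly $\{a_{1},\ldots,a_{r}\}$, which pins down $A$ as its decreasing rearrangement. (The paper instead closes the braids in a solid torus and uses the $H_{1}$-classes of the components; the cycle-type argument avoids that machinery.)

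Part (a), however, has a genuine gap at the step ``rigidity forces $\tau\beta\tau^{-1}=\beta_{A}$.'' To invoke the injectivity of $\pi$ on $\Div(\Delta_{n})$ you must first know that $\tau\beta\tau^{-1}$ is a \emph{positive} (square-free) braid, and conjugation by a positive braid does not preserve positivity: already $x_{1}x_{2}x_{1}^{-1}=x_{2}^{-1}x_{1}x_{2}\notin\mathcal{MB}_{3}$. The invariance of $\mathcal{SB}_{n}$ under conjugation is the conditional statement $(\bigcup_{\alpha}\alpha\,\mathcal{SB}_{n}\,\alpha^{-1})\cap\mathcal{MB}_{n}\subset\mathcal{SB}_{n}$; it tells you that a conjugate which happens to be positive is simple, not that $\tau\beta\tau^{-1}$ is positive. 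What your reduction really requires is that $\pi(\beta_{A})$ lies in the set of permutations realized by \emph{positive} conjugates of $\beta$, and this set is in general a proper subset of the full $\Sigma_{n}$-conjugacy class of $\pi(\beta)$: all positive conjugates of $\beta$ have word length $|\beta|$, and $\pi$ identifies word length with inversion number on square-free braids, so for $\beta=x_{1}\in\mathcal{B}_{3}$ the permutation $(1\,3)$ is conjugate to $(1\,2)$ yet is not the permutation of any positive conjugate of $x_{1}$ (its permutation braid $x_{1}x_{2}x_{1}$ has length $3$). Producing a chain of conjugations that stays positive and lands on $\beta_{A}$ is precisely the constructive content of the paper's proof (merging consecutive blocks via $\beta_{K,J}\sim\beta_{k_{2},j_{1}}\cdots$, shifting blocks with $\beta_{c+1,d}\beta_{c+1,d+1}=\beta_{c,d}\beta_{c+1,d}$, and sorting block sizes with shifted $\Delta$'s), so the rigidity reduction does not bypass it. Your alternative route has the same missing content in a different place: a single $\Delta$-conjugation on the strand block of a cycle sends $x_{i}\mapsto x_{a+b-i}$ and merely reverses the cycle (e.g.\ $D(2,1)U(3,4)\mapsto U(3,4)D(2,1)$, not $U(1,4)$), so normalizing a cycle to an ascending block is itself a multi-step positive conjugation that must be exhibited.
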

Here is a picture of a simple braid in $\mathcal{B}_{13}:$ if $A=(4,3,2,2)$ then the corresponding braid  $\beta_{A}$ is $(x_{1}x_{2}x_{3})(x_{5}x_{6})(x_{8})(x_{10})$.

\begin{center}
\begin{picture}(300,90)
\thicklines
\put(10,10){\line(0,1){40}}\put(30,10){\line(0,1){20}}\put(50,50){\line(0,1){20}}\put(70,30){\line(0,1){40}}
\put(10,50){\PN}\put(30,30){\PN}\put(50,10){\PN}
\put(90,10){\line(0,1){40}}\put(110,10){\line(0,1){20}}\put(130,10){\line(0,1){20}}\put(130,50){\line(0,1){20}}
\put(90,50){\PN}\put(110,30){\PN}
\multiput(150,10)(20,0){4}{\line(0,1){40}}\multiput(150,50)(40,0){2}{\PN}
\multiput(230,10)(20,0){2}{\line(0,1){60}}
  \put(8,10){{$\underbrace{\,\,\,\,\,\,\,\,\,\,\,\,\,\,\,\,\,\,\,\,\,\,\,\,\,\,\,\,\,\,\,\,\,}_{a_{1}=4}$}}
   \put(85,10){{$\underbrace{\,\,\,\,\,\,\,\,\,\,\,\,\,\,\,\,\,\,\,\,\,\,\,\,}_{a_{2}=3}$}}
   \put(147,10){{$\underbrace{\,\,\,\,\,\,\,\,\,\,\,\,\,}_{a_{3}=2}$}}
   \put(187,10){{$\underbrace{\,\,\,\,\,\,\,\,\,\,\,\,\,}_{a_{4}=2}$}}
   \put(10,74){\tiny$1$}\put(30,74){\tiny$2$}\put(50,74){\tiny$3$}\put(70,74){\tiny$4$}\put(90,74){\tiny$5$}\put(110,74){\tiny$6$}
   \put(130,74){\tiny$7$}\put(150,74){\tiny$8$}\put(170,74){\tiny$9$}\put(190,74){\tiny$10$}\put(210,74){\tiny$11$}
   \put(228,74){\tiny$12$}\put(248,74){\tiny$13$}
\end{picture}
\end{center}
\medskip
\begin{cor}A simple braid $\beta\in \mathcal{SB}=\coprod\limits_{n\geq 1}\mathcal{SB}_{n}$ is Markov equivalent with $1_{n}$, the unit braid in some $\mathcal{MB}_{n}$.
\end{cor}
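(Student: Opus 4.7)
The plan is to derive the corollary directly from the geometric characterization in Theorem~\ref{main: th} together with Markov's classical theorem. By Theorem~\ref{main: th}, any simple braid $\beta \in \mathcal{SB}$ belongs to $\mathcal{GSB}$, so the closure $\widehat{\beta}$ is a trivial link. Let $k$ denote the number of components of $\widehat{\beta}$.

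I will then invoke the fact that the closure of the unit braid $1_k \in \mathcal{MB}_k$ is precisely the $k$-component unlink, which is isotopic to $\widehat{\beta}$. Markov's theorem (see \cite{Bir:74}) asserts that two braids, possibly with different numbers of strands, have isotopic closed-braid closures if and only if they are related by a finite sequence of $MI$ and $MII_\pm$ moves. Applying this to $\beta$ and $1_k$ yields the required Markov equivalence, where $k$ is determined as above.

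If one prefers an explicit combinatorial argument, the plan is to start from the canonical form of Theorem~\ref{canonical:conj sq fre}: first conjugate $\beta$ to $\beta_A=(x_1\ldots x_{s_1-1})\ldots (x_{s_{r-1}+1}\ldots x_{s_r-1})$ by $MI$, and then process the last block $(x_{s_{r-1}+1}\ldots x_{s_r-1})$ by repeated $MII_+^{-1}$ destabilizations. Since within $\beta_A$ the generator $x_{s_r-1}$ appears exactly once, at the very end, one destabilization produces a shorter canonical word; iterating this strips one generator at a time until the block is exhausted, at which point one obtains a simple braid in canonical form with one fewer block, and the induction continues. The terminal braid, once all blocks have been stripped, is the unit braid on the remaining strands, whose count equals $k$.

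The main obstacle is essentially nonexistent: the statement is an immediate byproduct of the equality $\mathcal{SB}=\mathcal{GSB}$ proved in the main theorem. The only point that deserves explicit attention is that the corollary appeals to \emph{full} Markov equivalence (allowing both $MII_+$ and $MII_-$), in contrast with the more restrictive definition of $\mathcal{MSB}_n$ which permits only positive stabilization; this is why the statement is stronger than simply saying $\beta\in\mathcal{MSB}$.
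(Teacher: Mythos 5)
Your first argument is correct and complete: since $\mathcal{SB}=\mathcal{GSB}$ by Theorem \ref{main: th}, the closure $\widehat{\beta}$ is a $k$-component unlink for some $k$, which is also the closure of $1_{k}$, and the classical Markov theorem \cite{Bir:74} then gives the equivalence. The paper states the corollary without proof, but its placement immediately after Theorem \ref{canonical:conj sq fre} indicates that the intended argument is your second, explicit one (conjugate to $\beta_{A}$, then destabilize); your first route is a heavier hammer in that it invokes the hard direction of Markov's theorem, but it buys a one-line proof. Your closing observation is also the right one to make: if ``Markov equivalent'' is read in the restricted sense of Definition \ref{markov:sqfree} (only $MI$ and $MII_{+}$, all intermediate braids positive), the Markov-theorem argument no longer suffices and the explicit argument becomes necessary.

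That explicit argument, as sketched, has a gap in the induction. Destabilization removes the \emph{top} generator of the ambient braid group, so stripping the last block $(x_{s_{r-1}+1}\ldots x_{s_{r}-1})$ works letter by letter only while that block's tail coincides with the top generator; once the block is exhausted you sit in $\mathcal{B}_{s_{r-1}+1}$ with the braid $(x_{1}\ldots x_{s_{1}-1})\ldots(x_{s_{r-2}+1}\ldots x_{s_{r-1}-1})$. Strand $s_{r-1}+1$ survives as the single representative of the component you just simplified, and it lies \emph{above} the next block, whose last letter $x_{s_{r-1}-1}$ is not the top generator $x_{s_{r-1}}$; no $MII_{+}$ destabilization applies, so the induction does not ``continue'' verbatim. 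The repair is easy but must be stated: insert an $MI$ move conjugating by $U(1,s_{r-1})=x_{1}\cdots x_{s_{r-1}}$, which shifts every index up by one and moves the leftover trivial strand to position $1$, producing the positive word $(x_{2}\ldots x_{s_{1}})\ldots(x_{s_{r-2}+2}\ldots x_{s_{r-1}})$ whose top generator again occurs exactly once, at the end. The same remark applies at the very start if $\beta\in\mathcal{SB}_{n}$ with $n>s_{r}$: the trailing trivial strands cannot and should not be destabilized, and they account for the difference between $1_{k}$ and $1_{r}$.
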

The canonical projection of the braid group  to the symmetric group $\Sigma_{n}$ restricted to the square free braids gives a bijection; restricted to the simple braids gives a bijection between conjugacy classes (for a subset $A\subset \mathcal{B}_{n}$, $A/_{\sim}$ denote the set of conjugacy classes intersecting $A$):

\begin{cor}\label{cor:diagram}There is a commutative diagram of sets where $s$ and $\pi'$ are bijections:
\begin{center}
\begin{picture}(300,130)
\thicklines
\put(10,10){$\mathcal{SB}_{n}/_{\large\sim}$}
\put(50,13){\vector(1,0){210}}
\put(30,25){\vector(2,1){30}}
\put(60,45){$\Div(\Delta_{n})/_{\sim}$}
\put(117,50){\vector(1,0){40}}
\put(160,45){$\mathcal{MB}_{n}/_{\sim}$}
\put(190,90){\vector(1,0){40}} \put(202,50){\vector(1,0){25}}
\put(230,45){$\mathcal{B}_{n}/_{\sim}$}
\put(243,38){\vector(2,-1){30}}\put(262,10){$\Sigma_{n}/_{\sim}$}
\put(163,85){$\mathcal{MB}_{n}$}\put(235,85){$\mathcal{B}_{n}$}
\multiput(175,80)(67,0){2}{\vector(0,-1){20}}
\put(245,90){\vector(1,1){23}}
\put(270,115){$\Sigma_{n}$}
\put(280,110){\vector(0,-1){85}}\put(265,120){\vector(-1,0){155}}
\put(60,115){$\Div(\Delta_{n})$}
\put(75,108){\vector(0,-1){50}}\put(100,105){\vector(4,-1){60}}
\put(10,115){$\mathcal{SB}_{n}$}
\put(19,110){\vector(0,-1){85}}\put(30,120){\vector(1,0){25}}
\put(150,18){$\pi'$}\put(148,2){$\approx$}\put(180,123){$s$}\put(178,110){$\approx$}
\put(260,95){$\pi$}
\end{picture}
\end{center}
\end{cor}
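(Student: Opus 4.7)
The plan is to verify three things: that $s$ is a bijection, that $\pi'$ is a bijection, and that the diagram commutes. For $s$, I would appeal to classical Garside theory: the canonical form $(*)$ from the introduction shows that distinct elements of $\Div(\Delta_n)$ have distinct canonical presentations and distinct images under $\pi$, and a count using $(*)$ gives $|\Div(\Delta_n)|=n!$. Hence $\pi|_{\Div(\Delta_n)}$ is bijective onto $\Sigma_n$, and $s$ is its inverse, sending each $\sigma$ to its positive permutation braid. Commutativity is formal: all top-row arrows are either the inclusions $\mathcal{SB}_n\hookrightarrow\Div(\Delta_n)\hookrightarrow\mathcal{MB}_n\hookrightarrow\mathcal{B}_n$, the projection $\pi$, or the section $s$; all vertical arrows are the quotient by conjugacy; and all bottom-row arrows are induced, so each square and triangle commutes by the universal property of the quotient.

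The substantive part is the bijectivity of $\pi'$, for which I would invoke Theorem~\ref{canonical:conj sq fre}. By that theorem, each conjugacy class in $\mathcal{SB}_n$ has a unique representative $\beta_A$ with $A=(a_1,\ldots,a_r)$, $a_1\geq\cdots\geq a_r\geq 2$, and $s_r\leq n$. A direct calculation shows that each factor $x_{s_{i-1}+1}\cdots x_{s_i-1}$ of $\beta_A$ projects in $\Sigma_n$ to a cycle of length $a_i$ on the block $\{s_{i-1}+1,\ldots,s_i\}$; since the supports are pairwise disjoint and the remaining $n-s_r$ strands are fixed, $\pi(\beta_A)$ has cycle type equal to the partition $(a_1,\ldots,a_r,1^{n-s_r})$ of $n$. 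Since conjugacy classes of $\Sigma_n$ are parameterized by partitions of $n$, and such partitions correspond bijectively to admissible sequences $A$ (by deleting the parts equal to $1$ and re-sorting), $\pi'$ is a bijection.

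The main technical obstacle is the bookkeeping required to compute $\pi(\beta_A)$: one must trace individual strands through a word of the form $x_k x_{k+1}\cdots x_\ell$ and verify both the length $\ell-k+2$ and the support $\{k,\ldots,\ell+1\}$ of the resulting cycle, being careful with the convention for composing permutations. Once this calculation is in hand, the corollary follows formally from Theorem~\ref{canonical:conj sq fre} and classical Garside theory.
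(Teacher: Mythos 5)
Your proposal is correct and follows essentially the same route as the paper: the paper likewise cites the classical bijection between $\Div(\Delta_{n})$ and $\Sigma_{n}$ for $s$, and deduces the bijectivity of $\pi'$ from the canonical forms $\beta_{A}$ of Theorem~\ref{canonical:conj sq fre}, whose images in $\Sigma_{n}$ realize each cycle type exactly once. You merely spell out the cycle-type computation and the $n!$ count that the paper leaves implicit.
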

Familiarity with Garside paper~\cite{Garside:69}, the canonical form of square free braids $(*)$, and simple properties of the polynomial invariant for links $D$, a new specialization of HOMFLY polynomial (see~\cite{Barbu:Rehana}), make the paper self contained. Elementary combinatorics of simple braids will be discussed in~\cite{Barbu:Rehana3}. We hope the reader will enjoy finding new properties of simple braids, new applications, and also shorter proofs of these results.

\section{\textbf{Literally simple braids}}
First remark that the definition of a literally simple braid does not depend on the representative: if $\alpha =\pi(\omega)=\pi(\omega')$, where $\omega,\omega'\in\mathcal{MF}_{n-1}$  and $\omega$ is a simple word, then $\omega'$ is also a simple word (only commutation relations can be used). It is obvious that $\mathcal{LSB}_{n}$ satisfies the following properties:
\begin{proposition}
\indent 1) $\mathcal{LSB}_{n}\subset \Div(\Delta_{n})$.\\
\indent 2) If $\alpha\in \mathcal{LSB}_{n}$ and $\beta\,|\,\alpha$,  then $\beta\in\mathcal{LSB}_{n}$.\\
\indent 3) $\mathcal{LSB}_{n}$ is invariant under Garside involutions:\\
\indent \indent 3.1) $\Delta_{n}\mathcal{LSB}_{n}\Delta_{n}^{-1}=\mathcal{LSB}_{n}$\\
\indent \indent 3.2) $\Rev(\mathcal{LSB}_{n})=\mathcal{LSB}_{n}$.
\end{proposition}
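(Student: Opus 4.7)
My plan is to first isolate a strong underlying observation: \emph{if $\alpha \in \mathcal{LSB}_{n}$ then every positive word in $\mathcal{MF}_{n-1}$ representing $\alpha$ is itself a simple word.} Once this is available, all four statements follow in a few lines.

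For this observation I would invoke Garside's theorem that two positive words represent the same element of $\mathcal{MB}_{n}$ if and only if they are related by a finite sequence of the defining relations. A commutation $x_{i}x_{j}\leftrightarrow x_{j}x_{i}$ with $|i-j|\neq 1$ preserves the multiset of letters, hence preserves simplicity. The braid relation $x_{i}x_{i+1}x_{i}=x_{i+1}x_{i}x_{i+1}$ cannot be applied to a simple word at all, because both sides require some generator to occur twice within three consecutive positions. Starting from one simple representative of $\alpha$, induction on the length of a sequence of moves then shows that every positive word equivalent to it is again simple.

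With this in hand, (1) is immediate: a simple word contains no $x_{i}^{2}$ as a factor, so no positive presentation of $\alpha$ does either, making $\alpha$ square free and therefore a divisor of $\Delta_{n}$. For (2), write $\alpha=\gamma_{1}\beta\gamma_{2}$ in $\mathcal{MB}_{n}$, pick positive presentations of the three factors, and concatenate to obtain a positive word for $\alpha$; by the observation it must be simple, so in particular its contiguous sub-word representing $\beta$ is simple, yielding $\beta\in\mathcal{LSB}_{n}$.

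For (3.1) the identity $\Delta_{n}x_{i}\Delta_{n}^{-1}=x_{n-i}$ sends a simple word $x_{i_{1}}\ldots x_{i_{k}}$ to $x_{n-i_{1}}\ldots x_{n-i_{k}}$, which is still simple because $i\mapsto n-i$ permutes $\{1,\ldots,n-1\}$; the reverse inclusion follows since conjugation by $\Delta_{n}$ has order dividing two on $\mathcal{MB}_{n}$. For (3.2) I would first check that each defining relation of $\mathcal{MB}_{n}$ is individually invariant under word reversal, so that $\Rev$ descends to a well-defined involution of the monoid, and then observe that the reverse of a simple word is trivially simple. The only real obstacle is the underlying observation, and its content is purely combinatorial: the braid relation cannot act on a word without repeated letters. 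Once this is recorded, the rest of the proposition is bookkeeping.
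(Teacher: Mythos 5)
Your proposal is correct and follows essentially the same route as the paper: the paper's entire argument consists of the preliminary remark that two positive words representing a literally simple braid can differ only by commutation relations (the braid relation needs a repeated letter to fire), after which it declares the four properties ``obvious.'' You have simply written out the details of that observation and the resulting bookkeeping for square-freeness, divisors, conjugation by $\Delta_{n}$, and $\Rev$, all of which are sound.
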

\noindent Here $\Rev(x_{i_{1}}\ldots x_{i_{s}})=x_{i_{s}}\ldots x_{i_{1}}$, see~\cite{Garside:69}.
\begin{example}We will use two types of (very) simple braids: $U(a,b)=x_{a}x_{a+1}\ldots x_{b}$, where $1\leq a\leq b\leq n-1$, and $D(c,d)=x_{c}x_{c-1}\ldots x_{d+1}x_{d}$, where $1\leq d<c\leq n-1$; for instance, $x_{3}$ is $U(3,3)$ but not $D(3,3)$.
\end{example}
\begin{center}
\begin{picture}(300,80)
\thicklines
\put(10,10){\line(0,1){60}}
\put(30,10){\line(0,1){40}}
\put(50,10){\line(0,1){20}}
\put(70,50){\line(0,1){20}}
\put(90,30){\line(0,1){40}}
\put(110,10){\line(0,1){60}}
\multiput(30,50)(10,0){1}{\PN}
\multiput(50,30)(10,0){1}{\PN}
\multiput(70,10)(10,0){1}{\PN}
\put(190,10){\line(0,1){60}}
\put(210,30){\line(0,1){40}}
\put(230,50){\line(0,1){20}}
\put(250,10){\line(0,1){20}}
\put(270,10){\line(0,1){40}}
\put(290,10){\line(0,1){60}}
\multiput(210,10)(10,0){1}{\PN}
\multiput(230,30)(10,0){1}{\PN}
\multiput(250,50)(10,0){1}{\PN}
\put(35,-10){$U(2,4)$}
\put(225,-10){$D(4,2)$}
\put(8,72){\tiny$1$}\put(28,72){\tiny$2$}\put(48,72){\tiny$3$}\put(68,72){\tiny$4$}\put(88,72){\tiny$5$}\put(108,72){\tiny$6$}
\put(188,72){\tiny$1$}\put(208,72){\tiny$2$}\put(228,72){\tiny$3$}\put(248,72){\tiny$4$}\put(268,72){\tiny$5$}\put(288,72){\tiny$6$}
\end{picture}
\end{center}

\begin{definition} The support of a positive $n$-braid  $\beta$ is the set $\supp(\beta)=\{i\in\{1,\ldots,n-1\}\,|\,x_{i}\in\Div(\beta)\}$. The support of $\beta$ is \emph{connected} if it is an integral interval $[a,b]$. The supports of $\alpha$  and $\beta$ are \emph{consecutive} if  $\supp(\alpha)=[a,b]$  and  $\supp(\beta)=[b+1,c]$. In the case of connected support the \emph{extended support} is $\e-supp(\beta)=[\max(1,a-1),\min(b+1,n-1)]$. For instance, the braid $\beta\in \mathcal{MB}_{11},\,\beta=U(3,5)D(8,6)U(9,10)$ has a connected support $\supp (\beta)=[3,10]$ and $\e-supp(\beta)=[2,10]$.

\end{definition}

\begin{definition}A \emph{cycle} $\gamma$ is a literally simple braid, product of factors $U$ and $D$ with consecutive supports but not two consecutive factors $U$; the unit braid $1$ is not a cycle.
\begin{center}
\begin{picture}(230,120)
\thicklines
\put(10,10){\line(0,1){60}}
\put(10,90){\line(0,1){20}}
\put(30,10){\line(0,1){60}}
\put(50,10){\line(0,1){40}}
\put(50,70){\line(0,1){20}}
\put(70,10){\line(0,1){20}}
\put(70,70){\line(0,1){40}}
\put(90,10){\line(0,1){20}}
\put(90,50){\line(0,1){60}}
\put(110,10){\line(0,1){60}}
\put(110,90){\line(0,1){20}}
\put(130,10){\line(0,1){60}}
\put(150,30){\line(0,1){60}}
\put(170,50){\line(0,1){60}}
\put(190,10){\line(0,1){20}}
\put(190,70){\line(0,1){40}}
\put(210,10){\line(0,1){40}}
\put(210,70){\line(0,1){40}}
\multiput(10,70)(10,0){1}{\PN}
\multiput(30,90)(10,0){1}{\PN}
\multiput(50,50)(10,0){1}{\PN}
\multiput(70,30)(10,0){1}{\PN}
\multiput(110,70)(10,0){1}{\PN}
\multiput(130,90)(10,0){1}{\PN}
\multiput(150,10)(10,0){1}{\PN}
\multiput(170,30)(10,0){1}{\PN}
\multiput(190,50)(10,0){1}{\PN}

\put(0,-5){$\gamma_{1}=D(2,1)U(3,4)$}

\put(115,-5){$\gamma_{2}=D(7,6)D(10,8)$}
\put(8,112){\tiny$1$}\put(28,112){\tiny$2$}\put(48,112){\tiny$3$}\put(68,112){\tiny$4$}\put(88,112){\tiny$5$}\put(108,112){\tiny$6$}
\put(128,112){\tiny$7$}\put(148,112){\tiny$8$}\put(168,112){\tiny$9$}\put(188,112){\tiny$10$}\put(208,112){\tiny$11$}
\end{picture}
\end{center}
\end{definition}

\begin{remark} 1) Factorization of a cycle $\gamma$ as a product $\ldots (D\ldots D)U(D\ldots D)U\ldots$ with consecutive supports is unique ($U(a,b)U(b+1,c)$ should be replaced by $U(a,c)$).\\
\indent 2) If $\gamma$ is a cycle then $\supp(\gamma)=(\bigcup \supp(U_{i}))\bigcup(\bigcup\supp D_{j})$ is connected (and the union is a disjoint union).\\

\end{remark}

\begin{definition}For two cycles $\gamma_{1}$ and $\gamma_{2}$ with $\sup(\gamma_{1})=[a,b],\,\sup(\gamma_{2})=[c,d]$ we define a partial order by $\gamma_{1} \prec \gamma_{2}$, if $c\geq b+1$. If $c=b+1$, $\gamma_{1}$ and $\gamma_{2}$ are \emph{consecutive}. For $c\geq b+2$, $\gamma_{1}$ and $\gamma_{2}$ are called \emph{disjoint} cycles and  for $c\geq b+3$, $\gamma_{1}$ and $\gamma_{2}$  are called \emph{distant} cycles. We will extend this partial order to literally simple braids: if $\alpha=\gamma_{1}\ldots \gamma_{a},\,\beta=\gamma'_{1}\ldots \gamma'_{b}$ we define $\alpha\prec\beta$ if $\gamma_{1}\prec\gamma_{2}\prec,\ldots,\prec\gamma_{a}\prec\gamma'_{1}\prec\ldots \prec\gamma'_{b}$. In this case we say that $\alpha$ and $\beta$ are \emph{disjoint} (\emph{distant}) simple braids if $\gamma_{a}$ and $\gamma'_{1}$ are disjoint (distant) cycles.
\end{definition}
\begin{proposition}\label{| prop:unique representation of cycle} Every literally simple braid $\alpha \in \mathcal{LSB}_{n} $ can be written in a unique way as product of an increasing sequence of disjoint cycles  $\alpha=\gamma_{1}\gamma_{2}\ldots\gamma _{r}$.
\end{proposition}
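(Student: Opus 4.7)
My plan is to decompose $\alpha$ according to the maximal connected components of its support, and then show that each component piece is a cycle by invoking the canonical form $(*)$.

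\textbf{Splitting along the support.} Write $\supp(\alpha)\subseteq\{1,\ldots,n-1\}$ as the disjoint union of its maximal integer intervals $I_{1}<I_{2}<\ldots<I_{r}$, so consecutive $I_{k}$'s are separated by at least one missing index. For $i\in I_{p}$, $j\in I_{q}$ with $p\ne q$ one has $|i-j|\geq 2$, and hence $x_{i}$ commutes with $x_{j}$. Any simple word representing $\alpha$ can therefore be reshuffled by these commutations into a concatenation $\omega_{1}\omega_{2}\cdots\omega_{r}$ with $\omega_{k}$ supported in $I_{k}$, yielding a factorization $\alpha=\alpha_{1}\alpha_{2}\cdots\alpha_{r}$ where each $\alpha_{k}\in\mathcal{LSB}_{n}$ has the connected support $I_{k}$, and different $\alpha_{k}$'s commute so the order in the product is dictated by the left-to-right order of the intervals.

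\textbf{Connected support is a cycle.} For each $k$, apply $(*)$ to $\alpha_{k}\in\Div(\Delta_{n})$: $\alpha_{k}=\beta_{k_{1},j_{1}}\beta_{k_{2},j_{2}}\cdots\beta_{k_{s},j_{s}}$ with $k_{1}<k_{2}<\cdots<k_{s}$ and $j_{h}\leq k_{h}$. Since $\alpha_{k}$ is literally simple, this canonical form is itself a simple word; hence the index intervals $[j_{h},k_{h}]$ of the blocks must be pairwise disjoint (otherwise some generator would repeat), and the requirement $\bigcup_{h}[j_{h},k_{h}]=I_{k}$ forces them to tile $I_{k}$: $j_{h+1}=k_{h}+1$ for all $h$. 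Each block $\beta_{k_{h},j_{h}}$ is either the $D$-factor $D(k_{h},j_{h})$ (when $j_{h}<k_{h}$) or the single generator $x_{k_{h}}=U(k_{h},k_{h})$ (when $j_{h}=k_{h}$). Merging every maximal run of consecutive single-generator blocks $x_{k_{h}}x_{k_{h}+1}\cdots x_{k_{h}+t}$ into the $U$-factor $U(k_{h},k_{h}+t)$ produces a presentation of $\alpha_{k}$ as a product of $U$- and $D$-factors with consecutive supports and, by maximality of the merging, without two consecutive $U$'s. This is precisely a cycle $\gamma_{k}$ with $\supp(\gamma_{k})=I_{k}$.

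\textbf{Disjointness and uniqueness.} Since consecutive intervals $I_{k}$, $I_{k+1}$ are separated by at least one missing index, $\gamma_{k}$ and $\gamma_{k+1}$ are disjoint and $\gamma_{k}\prec\gamma_{k+1}$. For uniqueness, three observations suffice: the decomposition of $\supp(\alpha)$ into maximal intervals $I_{k}$ is intrinsic to $\alpha$; for each fixed $I_{k}$, the commutation freedom pins down $\alpha_{k}$ as the unique sub-braid of $\alpha$ supported on $I_{k}$; and the factorization of a cycle into $U$- and $D$-factors is unique by the preceding Remark. The one technical checkpoint is the merging step in the middle paragraph: one must verify that maximal merging of the single-$x$ blocks is exactly what prevents two $U$-factors from ending up adjacent in the final product.
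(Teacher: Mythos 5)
Your proof is correct and follows essentially the same route as the paper's: both arguments start from the canonical form $(*)$, observe that literal simplicity forces the blocks $\beta_{k_h,j_h}$ to have disjoint index intervals tiling each connected component of the support, convert the blocks into $D$- and merged $U$-factors to obtain the cycles, and derive uniqueness from the fact that the cycle supports must be the connected components of $\supp(\alpha)$. The only difference is cosmetic — you split along the support components before invoking $(*)$, while the paper applies $(*)$ to the whole braid and groups afterwards.
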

\begin{proof}
If the square free braid $\beta_{K,J}=\beta_{k_{1},j_{1}}\beta_{k_{2},j_{2}}\ldots \beta_{k_{s},j_{s}}$ is literally simple, we have $j_{h+1}>k_{h}$ for $1,2,\ldots ,s-1$ (no condition if $s\leq1$). Replace $\beta_{k,j}$ by $D(k,j)$ if $k>j$ and by $U(k,j)$ if $k=j$, next recollect products $U(k,k)U(k+k,k+1)\ldots U(l,l)$ into $U(k,l)$ and multiplying factors $U's$ and $D's$
with consecutive supports, find the product of disjoint factors $\gamma_{1}\prec\gamma_{2}\prec\ldots \prec\gamma_{r}$ (the number $r$ of cycles is at most the number $s$ of $\beta_{k,j}$ factors). Factorization  is unique because the support $\supp(\beta_{K,J})$ has the decomposition in connected components (and increasing order) the disjoint union $\coprod^{r}\limits _{i=1}\supp(\gamma_{i})$.
\end{proof}
\begin{remark}When the proof of the Theorem \ref{main: th} will be completed, the above proof will be a proof of Theorem \ref{|unique representation of cycle}.
\end{remark}

\begin{definition} (\cite{Elrifai: Morton}) If  $\beta \in \mathcal{MB}_{n}$, we denote by  $\inn(\beta)$ the \emph{initial set} of $\beta$: $\{i\,|\,x_{i}\in\Div_{L}(\beta)\}$.
\end{definition}
\begin{proposition}a) If $\gamma$ is a cycle with canonical factorization $\gamma=\ldots (D\ldots D)U$\\$(D\ldots D)U \ldots$, then $\inn(\gamma)=\{\hbox{~the index of the first letter of the first factor of }\gamma $ and the indices of the first letters of $D$ factors of $\gamma\}$.\\
\indent b) If $\alpha$ is a simple braid written in canonical form  $\alpha=\gamma_{1}\prec\gamma_{2}\prec\ldots\prec\gamma_{s}$ with disjoint cycles $\gamma_{i}$, then $\inn(\alpha)=\coprod^{s}\limits_{i=1}\inn(\gamma_{i})$.
\end{proposition}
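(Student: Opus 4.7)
The plan is to use the remark opening Section~2: two word representatives of a literally simple braid differ only by commutation moves, never by braid relations. This allows me to work in the trace (partially commutative) monoid on $x_1,\ldots,x_{n-1}$ with relations $x_a x_b = x_b x_a$ for $|a-b|\geq 2$. In this setting $i\in\inn(\gamma)$ iff $x_i$ can be commuted to the first position of the canonical word; and because non-commuting letters preserve their relative order under any commutation rearrangement, this happens iff every letter preceding $x_i$ in the canonical word $\gamma = F_1 F_2 \cdots F_k$ has index differing from $i$ by at least $2$.

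For part (a) I would do a four-way case analysis on where $x_i$ sits in the canonical word. Case $(i)$: $x_i$ is the first letter of $F_1$; trivially $i\in\inn(\gamma)$. Case $(ii)$: $x_i = x_{q_j}$ is the first letter of $F_j=D(q_j,p_j)$ with $j\geq 2$; every preceding letter has index in $[p_1,p_j-1]$, differing from $q_j$ by at least $q_j-(p_j-1)\geq 2$, so $x_{q_j}$ commutes past all preceding letters. Case $(iii)$: $x_i$ is a non-first letter of some $F_j$; the letter immediately preceding $x_i$ in $F_j$ has index $i\pm 1$, which does not commute with $x_i$, so $i\notin\inn(\gamma)$. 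Case $(iv)$: $x_i = x_{p_j}$ is the first letter of $F_j=U(p_j,q_j)$ with $j\geq 2$; the no-consecutive-$U$ rule forces $F_{j-1}$ to be a $D$ factor, whose first letter $x_{q_{j-1}}=x_{p_j-1}$ appears earlier in the word and has index differing from $p_j$ by $1$, so $x_{p_j}$ has a non-commuting predecessor and $i\notin\inn(\gamma)$.

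For part (b), write $\alpha=\gamma_1\gamma_2\cdots\gamma_s$ with disjoint cycles; the disjointness condition separates $\supp(\gamma_l)$ and $\supp(\gamma_m)$ ($l\neq m$) by a gap of at least one index. Hence for $i\in\supp(\gamma_l)$ every letter of every other cycle has index differing from $i$ by at least $2$, and so commutes with $x_i$. The non-commuting predecessors of $x_i$ in the canonical word of $\alpha$ therefore lie entirely inside $\gamma_l$, whence $i\in\inn(\alpha)$ iff $i\in\inn(\gamma_l)$; applying part (a) to each $\gamma_l$ yields $\inn(\alpha)=\coprod_{l=1}^{s}\inn(\gamma_l)$.

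The most delicate step is case $(iv)$: the blocking letter $x_{p_j-1}$ is \emph{not} immediately before $x_{p_j}$ in the canonical word, because other letters of $F_{j-1}$ sit between them. The key point is that the relative order of non-commuting letters is a trace invariant, so the presence of $x_{p_j-1}$ earlier in the word unconditionally prevents $x_{p_j}$ from reaching the first position, no matter how the intermediate letters are rearranged by commutations.
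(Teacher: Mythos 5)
Your argument is correct, and for the harder inclusion it takes a genuinely different route from the paper. Both proofs handle the easy direction the same way (the first letter of the first factor, and the first letter $x_{q_j}$ of each $D$ factor, commute past everything earlier because all preceding indices lie in $[p_1,p_j-1]$ and $q_j\geq p_j+1$). For the reverse inclusion, however, the paper stays inside the divisibility calculus of the Appendix: assuming $x_k|_L\gamma$ for a forbidden $k$, it applies Garside's Lemma \ref{garside} and Proposition \ref{leter:F} to force $x_{k\mp1}$ to divide a tail whose support does not contain $k\mp1$, a contradiction. You instead exploit the remark opening Section~2 — that all positive words representing a literally simple braid are connected by commutations alone, since a braid relation would need a repeated letter — to reduce everything to the trace monoid, where $i\in\inn(\gamma)$ iff no non-commuting letter precedes $x_i$, and where the relative order of non-commuting letters is a rearrangement invariant. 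This is more elementary and self-contained (no appeal to the l.c.m.\ propositions of the Appendix), at the cost of relying on a rigidity property special to square-free words, whereas the paper's machinery applies to arbitrary positive braids. Your case analysis is complete — in particular you correctly isolate case $(iv)$, where the blocking letter $x_{p_j-1}$ (the first letter of the preceding $D$ factor, which must exist by the no-consecutive-$U$ rule) is not adjacent to $x_{p_j}$ in the word but still obstructs it by trace invariance — and your part (b) coincides with the paper's.
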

\begin{proof}a) If $\gamma=x_{b}\ldots D(a,c)\ldots$ then obviously $x_{b}|_{L}\gamma$ and $x_{a}$ commutes with all the factors before $x_{a}$ (the factors $x_{a-1}$ should be in $D(a,c)$ and $x_{a+1}$ could appear only in the factors after $x_{a}$), hence $x_{a}|_{L}\gamma$. For opposite inclusion we will use the divisibility properties from section 7:  if $x_{k}|_{L}\gamma$ then $k\in\supp(\gamma)$ and we have two cases: $x_{k}$ is a divisor of a $U$ factor or of a $D$ factor. In the first case, $x_{k}|U$ (and $k\geq b+1$), the factor $x_{k-1}$ appears before $x_{k}$: $\gamma=F_{1}\cdot x_{k-1}\cdot F_{2}\cdot x_{k}\cdot F_{3}$; Garside Lemma \ref{garside} and Proposition \ref{leter:F} imply $x_{k}x_{k-1}|_{L}F_{2}\cdot x_{k}\cdot F_{3}$ but this is impossible because $k-1\notin \supp(F_{2}\cdot x_{k}\cdot F_{3})$. In the second case, $x_{k}|D(a,c)$ (and $k\leq a-1$), the factor $x_{k+1}$ is in front of $x_{k}$: $\gamma=F_{1}\cdot x_{k+1}x_{k}\cdot F_{2}$. Lemma \ref{garside} and Proposition \ref{leter:F} imply $x_{k}|_{L}x_{k+1}x_{k}\cdot F_{2}$, hence $x_{k+1}|_{L}F_{2}$ and this is not possible because $x_{k+1}\notin\supp(F_{2})$.\\
\indent b) Because any $x_{k}$, $k\in\supp(\gamma_{i})$, commutes with all $\gamma_{j}$, $j\neq i$, the formula is obvious.
\end{proof}
\begin{example} If $\alpha=\gamma_{1}\gamma_{2}$, $\gamma_{1}=U(2,4)D(6,5)D(9,7)U(10,11)$, $\gamma_{2}=U(13,14)$\\$D(17,15)$, then $\inn(\alpha)=\{2,6,9,13,17\}$. For the computation of $\inn(\gamma)$ for a positive braid $\alpha$, see \cite{Barbu:usman}.
\end{example}
\section{\textbf{Conjugate simple  braids}}
We start to show that $\mathcal{CSB}_{n}\subseteq \mathcal{LSB}_{n}$:
\begin{lemma}
\label{crit:conj sq fre} If
$\beta_{K,J}=\beta_{k_{1},j_{1}}\beta_{k_{2},j_{2}}\ldots
\beta_{k_{s},j_{s}}$  (where $1\leq k_{1}<k_{2}<\ldots k_{s}\leq
n-1$ and $j_{i}\leq k_{i}$ for all $i=1,2,\ldots,s$) is a conjugate simple braid in $\mathcal{B}_{n}$, then  $j_{i+1}>k_{i}$ for all
$i=1,2,\ldots,s-1$.
\end{lemma}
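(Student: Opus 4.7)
The approach is to argue by contradiction: assume $j_{i+1}\leq k_i$ for some $i\in\{1,\ldots,s-1\}$, and exhibit a positive conjugate of $\beta_{K,J}$ that fails to be square free.

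First I would cyclically conjugate $\beta_{K,J}$ by the positive divisor $\delta=\beta_{k_1,j_1}\cdots\beta_{k_i,j_i}$, using the Garside fact that a left factor can be shifted to the right. This yields
$$\gamma=\beta_{k_{i+1},j_{i+1}}\beta_{k_{i+2},j_{i+2}}\cdots\beta_{k_s,j_s}\beta_{k_1,j_1}\cdots\beta_{k_i,j_i}\sim\beta_{K,J}.$$
Because $j_{i+1}\leq k_i<k_{i+1}$, the leading factor $\beta_{k_{i+1},j_{i+1}}=x_{k_{i+1}}\cdots x_{j_{i+1}}$ contains the generator $x_{k_i}$, and the trailing factor $\beta_{k_i,j_i}$ begins with $x_{k_i}$; so $\gamma$ carries two \emph{competing} occurrences of $x_{k_i}$ at its two ends. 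The plan is to rewrite $\gamma$ via commutations ($x_px_q=x_qx_p$ for $|p-q|\geq 2$) and braid relations ($x_px_{p-1}x_p=x_{p-1}x_px_{p-1}$) so as to collide these letters (or descendant generators) into a square $x_\ell^2$.

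I would carry out the crucial local computation first in the case $s=2$, where $\gamma=\beta_{k_2,j_2}\beta_{k_1,j_1}$. The top letters $x_{k_2},\ldots,x_{k_1+2}$ of the first factor commute past every letter of $\beta_{k_1,j_1}$, reducing the problem to the core product $(x_{k_1+1}x_{k_1}x_{k_1-1}\cdots x_{j_2})(x_{k_1}x_{k_1-1}\cdots x_{j_1})$. I would then apply a cascade: move the leading $x_{k_1}$ of the second factor leftward across the commuting letters $x_{j_2},\ldots,x_{k_1-2}$, apply the braid relation $x_{k_1}x_{k_1-1}x_{k_1}=x_{k_1-1}x_{k_1}x_{k_1-1}$, and repeat one index lower. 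The cascade terminates near $x_{j_2}$, where two identical generators are forced into adjacency and produce a square $x_\ell^2$ for some $\ell\in[j_2,k_1]$. The borderline case $j_2=k_1$ needs no rewriting at all, since $x_{k_1}\cdot x_{k_1}$ already appears at the junction.

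The main obstacle is the extension of this cascade from $s=2$ to $s\geq 3$: the intervening factors $\beta_{k_l,j_l}$ with $l\neq i,i+1$ sit inside $\gamma$ between the two critical occurrences of $x_{k_i}$. Their supports $[j_l,k_l]$ satisfy $k_l<k_i$ for $l<i$ and $k_l>k_{i+1}$ for $l>i+1$, so most of their letters commute with the cascade, but a few boundary cases do not. Where a direct cascade is blocked, I would iterate the cyclic conjugation: each pass either exhibits a square immediately or produces a new canonical form $\beta_{K',J'}$ whose complexity strictly decreases in the lexicographic pair $(s',k'_1)$ (as one verifies in small examples such as $\beta_{2,2}\beta_{4,1}\sim\beta_{1,1}\beta_{4,1}$). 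Since this measure is bounded below, the iteration terminates in finitely many steps with a non-square-free positive conjugate of $\beta_{K,J}$, contradicting the assumption that $\beta_{K,J}$ is conjugate simple.
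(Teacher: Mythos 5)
Your overall shape (conjugate so that the violating pair collides, then induct on a complexity measure) is the same as the paper's, but the two concrete claims that would turn the sketch into a proof do not hold as stated. The central assertion for $s=2$ --- that the cascade of commutations and braid relations always forces two identical generators into adjacency --- is false. Take $\beta_{2,2}\beta_{3,1}=x_{2}\cdot x_{3}x_{2}x_{1}$, so $k_{1}=2$, $j_{1}=2$, $k_{2}=3$, $j_{2}=1\leq k_{1}$. Your cyclic conjugate is $\gamma=\beta_{3,1}\beta_{2,2}=x_{3}x_{2}x_{1}x_{2}$, whose complete set of positive words is $\{x_{3}x_{2}x_{1}x_{2},\,x_{3}x_{1}x_{2}x_{1},\,x_{1}x_{3}x_{2}x_{1}\}$ --- all square free. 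No rewriting of $\gamma$ can exhibit a square; a further conjugation is needed (e.g.\ $x_{1}^{-1}\gamma x_{1}=x_{3}x_{2}x_{1}^{2}$). The cascade genuinely breaks down whenever the trailing block is too short relative to the overlap (here $j_{1}>j_{2}$, so the descending chain of braid relations runs out of letters to feed on), and this already happens at $s=2$, not only for $s\geq 3$ as you suggest; indeed your own illustrative example $\beta_{2,2}\beta_{4,1}$ has $s=2$.

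The fallback iteration is also not pinned down enough to close the argument. You do not specify which conjugation constitutes one pass in general; you do not show that the new canonical form still violates $j_{h+1}>k_{h}$ (without this the process could halt at a literally simple braid and prove nothing); and the claim that $(s',k_{1}')$ strictly decreases lexicographically is unverified and nontrivial, since the number of blocks can grow along the way --- conjugating $\beta_{2,2}\beta_{3,1}$ by $x_{1}$ gives $x_{1}x_{2}x_{3}x_{2}=\beta_{1,1}\beta_{2,2}\beta_{3,2}$, with three blocks. The paper resolves exactly these difficulties by conjugating with the single generator $x_{j_{s}}$ attached to the last block and running a double induction on the number of blocks and the length of the last block, with an explicit case analysis (its Cases 1--4 and subcases 3.1--3.4) verifying at each step that one either sees a square or strictly decreases the measure while a violating pair survives. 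Your proposal becomes a proof only after supplying bookkeeping of comparable precision.
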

\begin{proof} The proof is by double induction on the number $s$ of blocks
$\beta_{k,j}$ and on the length of the last block
$\beta_{k_{s},j_{s}}$. During this proof we conjugate a positive
braid $\beta$ with positive braids $\gamma$ involving only letters with indices in $\supp(\beta)$.  Given a braid violating the condition
$j_{i+1}> k_{i}$ for some $i$, we conjugate this braid to obtain another
one containing a square or having a smaller number of blocks or a
smaller length of the last block and still containing a pair
$j_{h+1}\leq k_{h}.$
The induction starts with $s=1$ (one block) or $k_{s}=j_{s}>k_{s-1}$ (the last block is a singleton).

Let us analyze  the case where $k_{s-1}<j_{s}(\leq k_{s}).$

\emph {Case 1:} $j_{s}>k_{s-1}+1$. The first $s-1$ blocks contains a
pair $j_{i+1}>k_{i}$ and there is a conjugate of this braid (using
only letters which commutes with $\beta_{k_{s},j_{s}}$) containing
squares (induction on $s$).

\emph {Case 2:} $j_{s}=k_{s-1}+1$. In this case $\beta_{k_{s},j_{s}}$
commutes with $\beta_{k_{i},j_{i}}$  for $i=1,\ldots,s-2:$
$$\beta_{K,J}\sim\beta_{k_{s},j_{s}}\beta_{k,j}\beta^{-1}_{k_{s},j_{s}}=\beta_{k_{1},j_{1}}\beta_{k_{2},j_{2}}\ldots
(\beta_{k_{s},j_{s}}\beta_{k_{s-1},j_{s-1}}).$$ and we reduced the
number of
blocks by one, and again we have a pair $j_{i+1}\geq k_{i}.$\\
\indent Now we start the analyze the case  $j_{s}\leq k_{s-1}.$
Conjugate $\beta_{K,J}$ with $x_{j_{s}}$, and denote
$\beta_{K,J}\sim \beta'=x_{j_{s}}\beta_{k,j}x^{-1}_{j_{s}}.$

\emph {Case 3:} $j_{s}<k_{s-1}$. We divide the computation of  $\beta'=x_{j_{s}}\beta_{K,J}x_{j_{s}}^{-1}$ into four subcases:

3.1) there exists $k_{a}=j_{s}-1,\,k_{a+1}=j_{s}$, then
\begin{eqnarray*}
  \beta' &=& \beta_{k_{1},j_{1}}\beta_{k_{2},j_{2}}\ldots
x_{j_{s}}(x_{k_{a}}\ldots x_{j_{a}})(x_{k_{a+1}}\ldots x_{j_{a+1}})
\ldots\beta_{k_{s},j_{s}+1} \\
   &=& \beta_{k_{1},j_{1}}\beta_{k_{2},j_{2}}\ldots
(x_{k_{a}+1}x_{k_{a}}\ldots x_{j_{a}})(x_{k_{a+1}}\ldots
x_{j_{a+1}}) \ldots\beta_{k_{s},j_{s}+1}
\end{eqnarray*}
and we have two subcases:

3.1.1) if $j_{a}\leq j_{a+1}$, then
\begin{eqnarray*}
  \beta' &=& \beta_{k_{1},j_{1}}\beta_{k_{2},j_{2}}\ldots
(x_{k_{a+1}}x_{k_{a}}\ldots x_{j_{a+1}}\ldots
x_{j_{a}})(x_{k_{a+1}}\ldots x_{j_{a+1}})
\ldots\beta_{k_{s},j_{s}+1} \\
   &=&  \beta_{k_{1},j_{1}}\beta_{k_{2},j_{2}}\ldots
(x_{k_{a}}\ldots x_{j_{a+1}-1})(x_{k_{a+1}}\ldots x_{j_{a}})
\ldots\beta_{k_{s},j_{s}+1}
\end{eqnarray*}
and in this canonical form of  $\beta'_{K,J}$ the last index in the
$a+1$-block is not greater than the first index in the $a$-block:
$x_{k_{a}}\geq
x_{j_{a}};$

3.1.2) otherwise  $j_{a}>j_{a+1}$, and
\begin{eqnarray*}
    \beta' &=& \beta_{k_{1},j_{1}}\beta_{k_{2},j_{2}}\ldots
(x_{k_{a+1}}x_{k_{a}}\ldots x_{j_{a}})(x_{k_{a+1}}\ldots
x_{j_{a}}\ldots  x_{j_{a+1}}) \ldots\beta_{k_{s},j_{s}+1}\\
&=&\beta_{k_{1},j_{1}}\beta_{k_{2},j_{2}}\ldots (x_{k_{a}}\ldots
x_{j_{a}})(x_{k_{a+1}}\ldots x^{2}_{j_{a}}\ldots x_{j_{a+1}})
\ldots\beta_{k_{s},j_{s}+1}
\end{eqnarray*}
which contains $x^{2}_{j_{a}};$

3.2) there exists an index $k_{a}=j_{s}$, but none is equal to
$j_{s}-1$: now
\begin{eqnarray*}
   \beta'&=& \beta_{k_{1},j_{1}}\beta_{k_{2},j_{2}}\ldots
x_{j_{s}}(x_{k_{a}}\ldots x_{j_{a}}) \ldots\beta_{k_{s},j_{s}+1}
\end{eqnarray*}
which contains $x^{2}_{j_{s}};$

3.3) there exists an index $k_{a}=j_{s}-1$, but none is equal
to $j_{s}$, then
\begin{eqnarray*}
     \beta' &=& \beta_{k_{1},j_{1}}\beta_{k_{2},j_{2}}\ldots
x_{j_{s}}(x_{k_{a}}\ldots x_{j_{a}}) \ldots\beta_{k_{s},j_{s}+1}\\
&=& \beta_{k_{1},j_{1}}\beta_{k_{2},j_{2}}\ldots
(x_{k_{a+1}}x_{k_{a}}\ldots x_{j_{a}}) \ldots\beta_{k_{s},j_{s}+1}
\end{eqnarray*}
and the last index of the last block is too small: $j_{s+1}\leq k_{s-1}$ and also the length of the last block is smaller;

3.4) there does not exist any $k_{a}=j_{s}$ or $j_{s}-1$: after a permutation with the first blocks,
\begin{eqnarray*}
  \beta'&=&\beta_{k_{1},j_{1}}\ldots \beta_{k_{h},j_{h}}(x_{j_{s}})\beta_{k_{h+1},j_{h+1}}\ldots \beta
  _{k_{s},j_{s+1}}
\end{eqnarray*}
and we repeat the previous argument.

\emph {Case 4:} $j_{s}=k_{s-1}.$ This is divided into two subcases:

4.1) for $k_{s-2}=j_{s}-1$, apply the Case 3.1) for $a=s-2$;

4.2) for  $k_{s-2}\neq j_{s}-1,$ use the Case 3.2) for $a=s-1.$
\end{proof}
\begin{cor} $\mathcal{CSB}_{n}\subseteq \mathcal{LSB}_{n}$.
\end{cor}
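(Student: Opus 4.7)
The plan is to deduce the Corollary immediately from Lemma \ref{crit:conj sq fre} by translating its conclusion into the language of literally simple braids. Given $\beta\in\mathcal{CSB}_{n}$, since $\beta$ is conjugate to itself, Definition \ref{conj:sq free} forces $\beta$ itself to be square free; hence it admits a canonical form $\beta=\beta_{k_{1},j_{1}}\beta_{k_{2},j_{2}}\cdots\beta_{k_{s},j_{s}}$ of the type $(*)$, with $1\leq k_{1}<k_{2}<\cdots<k_{s}\leq n-1$ and $j_{i}\leq k_{i}$.

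Applying Lemma \ref{crit:conj sq fre} then yields $j_{h+1}>k_{h}$ for every $h=1,\ldots,s-1$. The next step is to verify that this strict inequality is precisely what is needed to exhibit $\beta$ as literally simple: each block $\beta_{k_{h},j_{h}}=x_{k_{h}}x_{k_{h}-1}\cdots x_{j_{h}}$ is by construction a simple word, with $\supp(\beta_{k_{h},j_{h}})=[j_{h},k_{h}]$, and the condition $j_{h+1}\geq k_{h}+1$ makes the supports of consecutive blocks disjoint. Concatenating the blocks therefore produces a word in $\mathcal{MF}_{n-1}$ in which no generator $x_{i}$ is repeated, i.e., a simple word representing $\beta$, so $\beta\in\mathcal{LSB}_{n}$.

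There is no substantive obstacle beyond what has already been overcome in Lemma \ref{crit:conj sq fre}; the Corollary is simply a bookkeeping step. The only points to check carefully are that $\beta\in\mathcal{CSB}_{n}$ really is square free (immediate from the definition, so $(*)$ applies) and that the strict inequality $j_{h+1}>k_{h}$ translates cleanly into disjointness of supports rather than merely non-nesting.
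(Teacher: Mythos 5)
Your argument is correct and is exactly the route the paper intends: the corollary is stated without proof as an immediate consequence of Lemma \ref{crit:conj sq fre}, and your bookkeeping (square freeness of $\beta$ itself from Definition \ref{conj:sq free}, the canonical form $(*)$, and the observation that $j_{h+1}>k_{h}$ together with $k_{1}<\cdots<k_{s}$ makes all block supports pairwise disjoint, so the concatenation is a simple word) is precisely the missing translation.
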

The proof of the opposite inclusion is longer; the key steps are the next gud and baf   Lemmas.
\begin{lemma}(\textbf{Going up and down})\label{up n down} If  $\gamma$ is a cycle, $\beta,\delta\in \mathcal{MB}_{n}$ and $\gamma\beta=\beta\delta$, then $\inn(\beta)\bigcap\supp(\gamma)\neq \emptyset$ implies that
$\inn(\beta)\bigcap\inn(\gamma)\neq\emptyset$.
\end{lemma}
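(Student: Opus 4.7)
Let $k \in \inn(\beta) \cap \supp(\gamma)$; if $k \in \inn(\gamma)$ we are done, so assume $k \notin \inn(\gamma)$. We aim to find some $j \in \inn(\beta) \cap \inn(\gamma)$.

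The key tool is the left lcm in the Garside monoid. Since $x_k \mid_L \beta$ (hence $x_k \mid_L \beta\delta$) and $\gamma \mid_L \gamma\beta = \beta\delta$, both $x_k$ and $\gamma$ left-divide $\beta\delta$, so their left lcm $\mu := \mathrm{lcm}_L(x_k, \gamma)$ does as well. Writing $\mu = \gamma\mu_2$ (possible since $\gamma \mid_L \mu$) and $\beta\delta = \mu\rho$, comparison with $\beta\delta = \gamma\beta$ and left cancellation of $\gamma$ in $\mathcal{MB}_n$ give $\beta = \mu_2\rho$, so $\mu_2 \mid_L \beta$. As $k \notin \inn(\gamma)$, $\mu_2 \neq 1$, and its first letter $x_{k'}$ yields $k' \in \inn(\beta)$.

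To identify $k'$ and show it is strictly closer to $\inn(\gamma)$ than $k$ in the cycle profile of $\gamma$, one analyzes the position of $k$ in the canonical factorization $\gamma = F_1 F_2 \ldots F_m$. The main computational ingredients are the commutation identities
\[
U(a,b)\cdot x_k = x_{k+1}\cdot U(a,b) \quad (a<k<b), \qquad D(b,a)\cdot x_k = x_{k-1}\cdot D(b,a) \quad (a<k\leq b),
\]
proved by sliding $x_k$ past commuting letters and then applying $x_j x_{j+1} x_j = x_{j+1} x_j x_{j+1}$, together with the commutation $x_k F_j = F_j x_k$ whenever $\supp(F_j)$ avoids $\{k-1, k, k+1\}$. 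Using these, one computes $\mathrm{lcm}_L(x_k, \gamma)$ case by case and finds its first letter to be $x_{k'}$ with $k'$ one step along $\supp(\gamma)$ toward the nearest element of $\inn(\gamma)$. The ``no two consecutive $U$'s'' condition in the definition of a cycle is essential here, guaranteeing that this ``motion toward a peak'' is well defined and consistent.

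The main obstacle is carrying out this case analysis rigorously: boundary transitions between adjacent factors $F_{i-1}, F_i, F_{i+1}$ (when $k = a_i$ or $k = b_i$) require separate handling, and verifying that $\mathrm{lcm}_L(x_k, \gamma)$ has the claimed leading letter is a nontrivial lcm computation in the Garside monoid — in several subcases one finds $\mathrm{lcm}_L(x_k, \gamma) = \gamma \cdot \nu$ for $\nu$ of length $>1$, so the direction of $k'$ is not read off from the commutation identities alone but from the minimality condition of the lcm. Once this is settled, the iteration $k \mapsto k'$ strictly decreases the cycle distance to $\inn(\gamma)$; since $\supp(\gamma)$ is finite, the process terminates at some $j \in \inn(\beta) \cap \inn(\gamma)$, proving the lemma.
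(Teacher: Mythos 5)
Your setup is sound and is essentially a repackaging of what the paper does: both arguments propagate initial letters of $\beta$ using left divisibility and lcm computations in the Garside monoid (the paper uses the local versions, Lemma \ref{garside} and Propositions \ref{l.c.m ,U}, \ref{l.c.m,D}, \ref{leter:F}, rather than the global $\mathrm{lcm}_L(x_k,\gamma)$, but the content is the same). However, the proof has a genuine gap: the entire substance of the lemma is the case analysis you defer. You correctly reduce to showing that $\mathrm{lcm}_L(x_k,\gamma)=\gamma\mu_2$ with $\mu_2\neq 1$ and that the first letter of $\mu_2$ lies in $\inn(\beta)$, but you never actually identify that first letter, you only assert that ``one finds'' it to be one step in the right direction. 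Since verifying this is exactly the hard part (as you yourself note, in several subcases the lcm has the form $\gamma\nu$ with $|\nu|>1$, so the answer is not read off from a single commutation), what remains is a statement of intent rather than a proof.

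Moreover, the one concrete claim you do make about the iteration is wrong: the new index $k'$ is \emph{not} ``one step toward the nearest element of $\inn(\gamma)$.'' The correct behaviour, which is what the paper's two cases establish, is that the direction is dictated by the type of the factor containing $k$: inside a $U(a,b)$ factor one always descends ($k\mapsto k-1$, via $x_kU(a,b)=U(a,b)x_{k-1}$), and inside a $D(a,b)$ factor one always ascends ($k\mapsto k+1$). For example, in $\gamma=D(3,1)U(4,10)D(12,11)$ one has $\inn(\gamma)=\{3,12\}$, and starting from $k=9$ the iteration moves \emph{down} to $3$, away from the nearer element $12$. Consequently your termination argument (``strictly decreases the cycle distance to $\inn(\gamma)$'') does not apply as stated; termination instead follows from the structure of a cycle: descending through a $U$ factor reaches its bottom and then the top letter of the preceding $D$ factor (or the first letter of $\gamma$), and ascending through a $D$ factor reaches its top letter, and these are precisely the elements of $\inn(\gamma)$. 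You would also need the boundary cases at the junctions between factors (handled in the paper via Lemma \ref{garside} b) and Propositions \ref{l.c.m ,U} b), \ref{l.c.m,D} e)), which is where the extra length of the lcm shows up.
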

\begin{proof}
\emph{Going down case:} Suppose that $\gamma$ has a factor $U(a,b)$ and there is an index $i\in \inn(\beta)\bigcap[a,b]$. First we want to show that $a\in\inn(\beta)$ by induction: if $a<i,i\in\inn(\beta)$, then $i-1\in\inn(\beta)$. All the factors before $U(a,b)$ (if any) commute with $x_{i}$ and Lemma \ref{garside} a) implies that $x_{i}|_{L}U(a,b)D(c,b+1)\ldots \beta$; by Proposition \ref{l.c.m ,U} d),  (in the case when $U(a,b)$ is the last factor of $\gamma$ we obtain directly $x_{i-1}|_{L}\beta$). Using again Lemma \ref{garside} a) ($i-1\leq b-1$), we obtain $x_{i-1}|_{L}\beta$. Now suppose that $a\in\inn(\beta)$ and $\gamma=\ldots U(a,b)\ldots $. If $U(a,b)$ is the first factor of $\gamma$, then $a\in \inn(\gamma)$, otherwise $\gamma=\ldots D(a-1,d)U(a,b)\ldots$ with $d\leq a-2$. $x_{i}$ commutes with all the factors before $D(a-1,d)$ (if there are such factors), therefore Lemma \ref{garside} a) implies  $x_{a}|_{L}D(a-1,d)U(a,b)\ldots \beta$. Proposition \ref{l.c.m,D} e) implies $D(a-1,d)D(c,d)U(a,b)\ldots \beta$, hence $x_{a-1}|_{L}U(a+1,b)\ldots \beta$, and again Lemma \ref{garside} a) gives $x_{a-1}|_{L}\beta$, and this element is in $\inn(\beta)\bigcap\inn(\gamma)$.\\
\indent \emph{Going up case:} Suppose now that $\gamma$ has a factor $D(a,b)\,(a\geq b+1)$ and there is an index $i\in\inn(\beta)\bigcap[b,a]$. We want to show that $a\in\inn(\beta)$. If $i\in[b,a-1]$, we show that $i+1\in\inn(\beta)$ and by induction we obtain the result. We start with the simplest case $i\in[b+1,a-1]$. All the factors before $D(a,b)$ (if any) commute with $x_{i}$ and Lemma \ref{garside} a) implies that $x_{i}|_{L}D(a,b)\ldots \beta$, hence by Proposition \ref{l.c.m,D} c), we obtain that $D(a,b)x_{i+1}$ divides $D(a,b)\beta$ (if $D(a,b)$ is the last factor of $\gamma$) or divides $D(a,b)U(a+1,c)\ldots\beta$ or divides $D(a,b)D(c,a+1)\ldots\beta$. In the first case we have $x_{i}|_{L}\beta$. In the last two cases, if $i+1\leq a-1$, $x_{i+1}$ commutes with the last factors of $\gamma$ and Lemma \ref{garside} a) implies  $x_{i+1}|_{L}\beta$; if $i+1=a$, we have to use Lemma \ref{garside} b) : in the second case, $x_{i}|_{L}U(a+2,c)\ldots \beta$, hence $x_{a}|_{L}\beta$, in the third case, $x_{a}|_{L}x_{a+1}$(factors with index $\geq a+2$)$\beta$, and again $x_{a}|_{L}\beta$. Now suppose that $b\in\inn(\beta)$. If $D(a,b)$ is the first factor of $\gamma$, the same argument is correct, otherwise $\gamma=\ldots U(c,b-1)D(a,b)\ldots$ or $\gamma=\ldots D(b-1,c)D(a,b)\ldots$. We show that $x_{b}$ is a left divisor of $D(a,b)\ldots \beta$ and next repeat the same argument: Lemma \ref{garside} a) gives $x_{b}|_{L}x_{b-1}D(a,b)\ldots \beta$ and $x_{b}|_{L}D(b-1,c)D(a,b)\ldots \beta$ respectively and Lemma \ref{garside} b) gives $x_{b}|_{L}D(a,b)\ldots \beta$ and $x_{b}|_{L}D(b-2,c)D(a,b)\ldots \beta$ (in the last case) and finally $x_{b}|_{L}D(a,b)\ldots \beta$ in both cases.
\end{proof}
\begin{remark}If $\gamma=x_{2}x_{3}$, $\beta=x_{1}x_{2}x_{3}$ and $\delta=x_{1}x_{2}$, we have $\gamma\beta=\beta\delta$, $1\in\e-supp(\gamma)\cap \inn(\beta)$ but the intersection $\inn(\beta)\cap\inn(\gamma)$ is empty. This explains the long computations of the next Lemma.
\end{remark}
In the next statement and in the proof of Theorem \ref{canonical:conj sq fre} we will use the \emph{shift} of a word in $x_{1}, x_{2},\ldots $ given by  $x_{i}\longrightarrow x_{i+1}$ (for instance, if $w=x_{3}x_{2}x_{5}$, then $\Sigma ^{2}w=x_{5}x_{4}x_{7}$ and $\Sigma^{-1}w=x_{2}x_{1}x_{4}$).
\begin{lemma}(\textbf{Back and forth})  Consider two disjoint nondistant cycles $\gamma_{1}\prec \gamma_{2} $ with $\supp(\gamma_{1})=[b,c-1]$, $\supp(\gamma_{2})=[c+1,e]$ and $\beta$, $\delta$ two positive  braids. We have the following implications:\\
\indent a) If $\gamma_{1}\beta=\beta\delta$ and $x_{c}|_{L}\beta$, then there exists a positive $\beta'$ such that
$$\beta=D(c,b)\beta'\hbox{ and } \Sigma(\gamma_{1})\beta'=\beta'\delta;$$
\indent b) If $\gamma_{2}\beta=\beta\delta$ and $x_{c}|_{L}\beta$, then there exists a positive $\beta''$ such that
$$\beta=U(c,e)\beta''\hbox{ and } \Sigma^{-1}(\gamma_{2})\beta''=\beta''\delta;$$
\indent c) If $(\gamma_{1}\gamma_{2})\beta=\beta\delta$ and $x_{c}|_{L}\beta$, then there exists a positive $\beta'''$ such that
\begin{eqnarray*}
   \beta&=& D(c,b)D(c+1,b+1)\ldots D(e,e-c+b)\beta''' \\
   &=& U(c,e)U(c-1,e-1)\ldots U(b,e-c+b)\beta'''
\end{eqnarray*}
 and $\Sigma^{-c+b-1}(\gamma_{2})\Sigma^{e-c+1}(\gamma_{1})\beta'''=\beta'''\delta.$
\end{lemma}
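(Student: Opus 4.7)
The plan is to combine a commutation identity with an iterative left-divisibility argument. For part~(a), I would first establish the key identity $\gamma_1\cdot D(c,b)=D(c,b)\cdot\Sigma(\gamma_1)$ in $\mathcal{MB}_n$. This reduces to a generator-level check: for $i\in[b,c-1]$, commute $x_i$ past $x_c,x_{c-1},\ldots,x_{i+2}$ (which all commute with $x_i$), apply the braid relation $x_ix_{i+1}x_i=x_{i+1}x_ix_{i+1}$, then commute the resulting $x_{i+1}$ past $x_{i-1},\ldots,x_b$. This yields $x_i\cdot D(c,b)=D(c,b)\cdot x_{i+1}$; induction on word length extends the identity to all positive words with support in $[b,c-1]$, hence to $\gamma_1$. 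Once the factorization $\beta=D(c,b)\beta'$ is available, substituting into $\gamma_1\beta=\beta\delta$ and applying the identity gives $D(c,b)\,\Sigma(\gamma_1)\,\beta'=D(c,b)\,\beta'\,\delta$, and left cancellativity of $\mathcal{MB}_n$ yields $\Sigma(\gamma_1)\beta'=\beta'\delta$.

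The main work---and the main obstacle---is to show $D(c,b)\,|_L\,\beta$. I would proceed by induction on $k$, extracting $x_c,x_{c-1},\ldots,x_b$ one at a time. Assuming $D(c,c-k)\,|_L\,\beta$ has been established (the base case $k=0$ being the hypothesis $x_c\,|_L\,\beta$), write $\beta=D(c,c-k)\beta_k$ and aim to deduce $x_{c-k-1}\,|_L\,\beta_k$. Substituting into $\gamma_1\beta=\beta\delta$ gives $\gamma_1\,D(c,c-k)\,\beta_k=D(c,c-k)\,\beta_k\,\delta$; the partial form of the identity from the previous paragraph (applied to the portion of $\gamma_1$ with support in $[c-k,c-1]$) moves this portion past $D(c,c-k)$ as a shifted cycle. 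The remaining ``low'' portion of $\gamma_1$ has support reaching down to $c-k-1$ because $\supp(\gamma_1)=[b,c-1]$ is a connected interval; Lemma~\ref{up n down} (in a suitably shifted form) combined with Garside's Lemma applied to the two adjacent left divisors $x_{c-k}$ and $x_{c-k-1}$ of the resulting braid then forces $x_{c-k-1}$ to be a left divisor of $\beta_k$ after left cancellation of $D(c,c-k)$. The technical heart lies in a case analysis on whether a cycle factor of $\gamma_1$ straddles the threshold $c-k$ and, if so, on whether it is of $U$- or $D$-type.

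Part~(b) is completely symmetric: the identity $\gamma_2\cdot U(c,e)=U(c,e)\cdot\Sigma^{-1}(\gamma_2)$ is proved by mirror-image computations, and the iterative extraction pulls out $x_c,x_{c+1},\ldots,x_e$ in increasing order from the left of $\beta$. For part~(c), $\gamma_1$ and $\gamma_2$ commute (disjoint non-adjacent supports), and the two nested products $D(c,b)D(c+1,b+1)\cdots D(e,e-c+b)$ and $U(c,e)U(c-1,e-1)\cdots U(b,e-c+b)$ represent the same positive braid $\Delta_1$, namely the ``half twist'' swapping the blocks of strands $[b,c]\leftrightarrow[c+1,e+1]$; their equality is verified by iterating the braid-relation moves of the first paragraph. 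Conjugation by $\Delta_1$ shifts $\gamma_1$ by $+(e-c+1)$ and $\gamma_2$ by $-(c-b+1)$, giving $(\gamma_1\gamma_2)\Delta_1=\Delta_1\cdot\Sigma^{-c+b-1}(\gamma_2)\,\Sigma^{e-c+1}(\gamma_1)$. To obtain $\Delta_1\,|_L\,\beta$ I would interleave the iterations of parts~(a) and~(b): use part~(a) to extract $D(c,b)$, then alternately apply (b) to the shifted $\gamma_2$ and (a) to the shifted $\gamma_1$ on the successive residual braids, each step relying on the connectedness argument of the previous paragraph. Substituting $\beta=\Delta_1\beta'''$ and cancelling $\Delta_1$ then yields the stated conjugation for $\beta'''$.
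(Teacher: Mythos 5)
Your overall architecture coincides with the paper's: the commutation identity $\gamma_{1}D(c,b)=D(c,b)\Sigma(\gamma_{1})$ (Proposition \ref{l.c.m,D} c) iterated over the letters of $\gamma_{1}$), a letter-by-letter induction showing $D(c,b)|_{L}\beta$, left cancellation to obtain the conjugation equation for $\beta'$, part b) by the $\Delta_{n}$-mirror symmetry, and part c) by interleaving the two extractions together with the identity $D(c,b)D(c+1,b+1)\cdots D(e,e-c+b)=U(c,e)U(c-1,e-1)\cdots U(b,e-c+b)$. So the plan is the right one, and the reductions in parts b) and c) are essentially those of the paper.

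The gap is in the inductive step of part a), which is where essentially all of the paper's work lies. Suppose $\beta=D(c,k)\beta_{0}$ with $k\geq b+1$; you must produce $x_{k-1}|_{L}\beta_{0}$. Your proposed splitting of $\gamma_{1}$ into a ``high portion'' (support in $[k,c-1]$, sliding past $D(c,k)$ as a shift) and a ``low portion'' is not available as stated: $\gamma_{1}$ is a single cycle, a word in which high and low letters are interleaved according to its $U$/$D$ factorization, and the one letter that neither commutes with nor shifts through $D(c,k)$ is precisely $x_{k-1}$. The paper therefore locates $x_{k-1}$ inside the word $\gamma_{1}$ and runs three separate computations depending on its position relative to the $U$ and $D$ factors, each time pushing $D(c,k)$ leftward, cancelling $D(c,k+1)$, and then using Lemma \ref{garside} together with Proposition \ref{leter:F} to transport $x_{k-1}$ past the remaining high-support tail onto $\beta_{0}$. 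Your substitute for this step --- ``Lemma \ref{up n down} in a suitably shifted form'' --- does not deliver the needed conclusion: the gud Lemma only asserts that $\inn(\beta)$ meets $\inn(\gamma)$, i.e.\ that \emph{some} initial letter of the cycle left-divides $\beta$; it says nothing about the specific letter $x_{k-1}$ left-dividing the quotient $\beta_{0}$, which is what the induction requires. The case analysis you defer to is not a routine verification but the technical core of the lemma. (A minor further point: in part c) you cannot invoke parts a) and b) literally, since the hypothesis there is $(\gamma_{1}\gamma_{2})\beta=\beta\delta$ rather than $\gamma_{i}\beta=\beta\delta$; one needs the purely divisibility-theoretic reformulations a$'$), b$'$) that the paper isolates for exactly this reason.)
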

\begin{proof}
 a) By induction on $k$ (from $c$ to $b$) we suppose that $\beta=D(c,k)$ is a left divisor of  $\gamma_{1}\beta$ and of $\beta$ and we have to show that $D(c,k-1)$ is also a left divisor of $\beta$. For an index $k$ in the interval $[b+1,c-1]$ we have $x_{k}|\gamma_{1}$, then the simple braid $\gamma_{1}$ has the form $\gamma_{1}=F_{1}(\leq k-2)x_{k-1}F_{2}(\geq k)$,  where $F(\leq m)$ and $F(\geq m)$ represent factors with supports having $m$ as the upper bound and the lower bound respectively. In the first case we have
 \begin{tabbing}
   0\indent \= 1\kill
  \> $D(c,k)|_{L}F_{1}(\leq k-2)x_{k-1}F_{2}(\geq k)D(c,k)\beta_{0}=$ \\
  \> $=F_{1}(\leq k-2)x_{k-1}D(c,k)\Sigma(F_{2})\beta_{0}=D(c,k+1)F_{1}(\leq k-2)x_{k-1}x_{k}\Sigma (F_{2})\beta_{0}$
 \end{tabbing}
  and from $x_{k}|_{L}F_{1}x_{k-1}x_{k}\Sigma(F_{2})\beta_{0}$,  we obtain
  $x_{k-1}|_{L}\Sigma(F_{2})\beta_{0}$ (Garside Lemma \ref{garside})  and next $x_{k-1}|_{L}$ (Proposition \ref{leter:F}) and this ends the induction step in the first case. In the second case we have
   \begin{tabbing}
     0\indent\=1 \kill
     $D(c,k)|_{L}F_{1}(\leq d-1)D(a,k-1)D(k-2,d)F_{2}(\geq d+1)D(c,k)\beta_{0}=$ \\
     \quad $=F_{1}(\leq d-1)D(a,k-1)D(c,k)D(k-2,d)\Sigma(F_{2})\beta_{0}=$ \\
     \quad $=D(c,a+2)F_{1}(\leq d-1)(x_{a}x_{a+1})(x_{a-1}x_{a})\ldots(x_{k-1}x_{k})D(k-2,d)F_{3}(\geq a+2)\beta_{0}$
   \end{tabbing}
   and from $x_{a+1}|_{L}F_{1}\cdot (x_{a}x_{a+1})\ldots(x_{k-1}x_{k})D(k-2,d)F_{3}\beta_{0}$ we obtain $x_{a}|_{L}(x_{a-1}x_{a})\ldots$\\$(x_{k-1}x_{k})D(k-2,d)F_{3}\beta_{0}$ (Lemma \ref{garside})  and a second induction (from $a$ to $k-1$) implies $x_{k}|_{L}(x_{k-1}x_{k})D(k-2,d)F_{3}\beta_{0}$, and  finally $x_{k-1}|_{L}D(k-2,d)F_{3}\beta_{0}$. Proposition \ref{leter:F} implies $x_{k-1}|_{L}\beta_{0}$, the end of the inductive step in this case. In the third case we have
\quad \quad \quad \quad  \begin{tabbing}
     0\=1 \kill
    \> \quad \quad \quad \quad $D(c,k)|_{L}F_{1}(\leq d-1)(x_{k-1}\ldots x_{d})F_{2}(\geq k)D(c,k)\beta_{0}= $\\
    \> \quad \quad \quad \quad \quad $=F_{1}(\leq d-1)(x_{k-1}\ldots x_{d})D(c,k)\Sigma(F_{2})\beta_{0}=$ \\
    \> \quad \quad \quad \quad \quad $=D(c,k+1)F_{1}(\leq d-1) x_{k-1}x_{k}\Sigma(F_{2})(x_{k-2}\ldots x_{d})\beta_{0}$
   \end{tabbing}
  and from $x_{k}|_{L}F_{1}x_{k-1}x_{k}\Sigma(F_{2}(\geq k+1))(x_{k-2}\ldots x_{d})\beta_{0}$ we obtain $x_{k-1}|_{L}\Sigma (F_{2})(x_{k-2}\ldots$\\$ x_{d}\beta_{0})$ (Proposition \ref{leter:F} and Lemma \ref{garside}), next $x_{k-1}|_{L}(x_{k-2}\ldots x_{d})\beta_{0}$ (again Lemma \ref{garside})
 and the final step $x_{k-1}|_{L}\beta_{0}$ (Proposition \ref{leter:F}). The second equality of part a) is a consequence of
 $$D(c,b)\Sigma(\gamma_{1})\beta'=\gamma_{1}D(c,b)\beta'=D(c,b)\beta'\delta.$$

\indent  b) If $\gamma_{2}(x_{c}\beta_{1})=(x_{c}\beta_{1})\delta$, $\supp(\gamma_{2})=[c+1,e]$, then conjugation by Garside element $\Delta_{n}$ gives $\gamma_{3}(x_{m}\beta_{2})=(x_{m}\beta_{2})\delta'$, where $\supp(\gamma_{3})=[n,m-1]$; applying part a)  of the Lemma we obtain $\beta_{2}=D(m-1,n)\beta'_{2}$ and conjugating again by $\Delta_{n}$ we find $\beta_{1}=U(c+1,e)\beta''$. Now $U(c,e)\Sigma^{-1}(\gamma_{2})\beta''=\gamma_{2}U(c,e)\beta''=U(c,e)\beta''\delta$ implies the second equation in part b).\\
\indent  c) We will use the first two parts  in the form given in the proof: \\
\noindent a$'$) if $D(c,k)|_{L}\gamma_{1}\beta,\,D(c,k)|_{L}\beta$, then $D(c,k+1)|_{L}\beta$, for $k\geq b+1$;\\
\noindent b$'$) if $U(c,m)|_{L}\gamma_{2}\beta,\,\,U(c,m)|_{L}\beta$, then $U(c,m+1)|_{L}\beta$, for $m\leq e-1$\\
\noindent ( part b$'$) is equivalent to a$'$) after a conjugation with  Garside braid). From $x_{c}|_{L}\gamma_{1}\gamma_{2}\beta$
 we infer $x_{c}|_{L}\gamma_{2}\beta$ (Proposition \ref{leter:F}) and $\beta=U(c,e)\beta_{0}$ (part b$'$). By induction we suppose that  $\beta=U(c,e) U(c-1,e-1)\ldots U(c-j,e-j)\beta_{j}$. From hypothesis
 $$x_{c}|_{L}\gamma_{1}\gamma_{2}U(c,e)\ldots U(c-j,e-j)\beta_{j}=\gamma_{1}U(c,e)\ldots U(c-j,e-j)\Sigma^{-j-1}(\gamma_{2})\beta_{j}$$
and also $x_{c}|_{L}U(c,e)\ldots U(c-j,e-j)\Sigma^{-j-1}(\gamma_{2})\beta_{j})$ and part a$'$) implies  $D(c,b)|_{L}U(c,e)$\\$U(c-1,e-1)\ldots U(c-j,e-j)\Sigma^{-j-1}(\gamma_{2})\beta_{j})$, hence $D(c-1,b)|_{L}U(c+1,e)U(c-1,e-1)\ldots U(c-j,e-j)\Sigma^{-j-1}(\gamma_{2})\beta_{j}$.
Garside Lemma \ref{garside} implies $D(c-1,b)|_{L}U(c-1,e-1)\ldots U(c-j,e-j)\Sigma^{-j-1}(\gamma_{2})\beta_{j}$ hence $D(c-2,b)|_{L}U(c,e-1)\ldots U(c-j,e-j)\Sigma^{-j-1}(\gamma_{2})\beta_{j}$. A second induction gives $D(c-j,b)|_{L} U(c-j,e-j)\Sigma^{-j-1}(\gamma_{2})\beta_{j}$ therefore $D(c-j-1,b)|_{L}U(c-j+1,e-j)\Sigma^{-j-1}(\gamma_{2})\beta_{j}$ and finally $D(c-j-1,b)|_{L}\Sigma^{-j-1}(\gamma_{2})\beta_{j}$. After $j+1$ desuspensions $\supp(\gamma_{2})=[c+1,e]$ becomes $[c-j,e-j-1]$, so we can use Proposition \ref{leter:F} to obtain $x_{c-j-1}|_{L}\beta_{j}$ and again part b$'$) for $U(c-j-1,e-j-1)|_{L}\beta_{j}$ and this complete the first half of part c). The last equality of part c) is a consequence of the relation
  $$D(c,b)D(c+1,b+1)\ldots D(e,e-c+b)=U(c,e)U(c-1,e-1)\ldots U(b,e-c+b),$$
  (start an induction by the length of $D(c,b)$ with the equality $D(c,b)D(c+1,b+1)\ldots D(e,e-c+b)=U(c,e)D(c-1,b)D(c,b+1)\ldots D(e-1,e-c+b)$):
  \begin{tabbing}
    0\indent\indent\indent \= 1 \kill
    $(\gamma_{1}\gamma_{2})\beta$ \>$= (\gamma_{1}\gamma_{2})D(c,b)D(c+1,b+1)\ldots D(e,e-c+b)\beta'''$ \\
     \>$ =(\gamma_{1}\gamma_{2})U(c,e)U(c-1,e-1)\ldots U(b,e-c+b)\beta'''$ \\
     \>$ =\gamma_{1}U()\Sigma^{-1}(\gamma_{2})U(c-1,e-1)\ldots U(b,e-c+b)\beta'''$ \\
     \> $=\gamma_{1}U(c-1,e-1)U(c-1,e-1)\ldots U(b,e-c+b)\Sigma^{-c+b-1}(\gamma_{2})\beta'''$ \\
     \> $=\gamma_{1}D(c,b)D(c+1,b+1)\ldots D(e,e-c+b)\Sigma^{-c+b-1} (\gamma_{2})\beta'''$\\
     \> $=D(c,b)\Sigma(\gamma_{1})D(c+1,b+1)\ldots D(e,e-c+b)\Sigma^{-c+b-1} (\gamma_{2})\beta'''$ \\
     \> $=D(c,b)D(c+1,b+1)\ldots D(e,e-c+b)\Sigma^{e-c+1}(\gamma_{1})\Sigma^{-c+b-1} (\gamma_{2})\beta'''$
  \end{tabbing}
and this is equal to $D(c,b)D(c+1,b+1)\ldots D(e,e-c+b)\beta'''\delta$ by hypothesis. The final remark is that $\supp(\Sigma^{e-c+1}(\gamma_{1}))=[e-c+b+1,e]$ and $\supp (\Sigma^{-c+b-1}(\gamma_{2}))=[b,e-c+b-1]$, hence the two suspensions commute, but essential for the next proof is the fact that $\gamma_{3}=\Sigma^{-c+b-1}(\gamma_{2})\Sigma^{e-c+1}(\gamma_{1})$ is also literally simple.
\end{proof}

\begin{proposition}\label{a and b}Suppose that $\alpha\in \mathcal{LSB}_{n}$ and $\beta,\delta \in\mathcal{MB}_{n}$:\\
\indent a) $\alpha\beta=\beta\delta$ implies that $\delta\in \mathcal{\mathcal{LSB}}_{n}$;\\
\indent b) $\beta\alpha=\delta\beta$ implies that $\delta\in \mathcal{\mathcal{LSB}}_{n}$.
\end{proposition}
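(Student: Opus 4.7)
The plan is to prove part~(a) by induction on the word length $\ell(\beta)$ and to deduce part~(b) via the $\Rev$ involution, which preserves $\mathcal{LSB}_n$. The base case $\beta=1$ gives $\delta=\alpha\in\mathcal{LSB}_n$. For the inductive step I write $\alpha=\gamma_1\cdots\gamma_r$ in the canonical disjoint-cycle form of Proposition~\ref{| prop:unique representation of cycle}, pick any $x_i\in\inn(\beta)$, and write $\beta=x_i\beta_0$ with $\ell(\beta_0)<\ell(\beta)$. Two scenarios finish the induction immediately. Scenario~(A): if $i\notin\e-supp(\gamma_k)$ for every $k$, then $x_i$ commutes with $\alpha$, so $\alpha\beta=x_i\alpha\beta_0=x_i\beta_0\delta$ left-cancels to $\alpha\beta_0=\beta_0\delta$. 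Scenario~(B): if $x_i|_L\alpha$, then using that the disjoint cycles of $\alpha$ commute pairwise I rearrange $\alpha$ so that the unique $\gamma_k$ containing $i$ in its $\inn$ comes first and write $\alpha=x_i\alpha_1$ with $\alpha_1\in\mathcal{LSB}_n$; cancelling $x_i$ gives $\alpha'\beta_0=\beta_0\delta$ with $\alpha':=\alpha_1 x_i$, and $\alpha'\in\mathcal{LSB}_n$ because appending the missing letter $x_i$ to a simple word for $\alpha_1$ produces a simple word. In either scenario the inductive hypothesis yields $\delta\in\mathcal{LSB}_n$.

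The remaining work is to reduce to Scenario~(A) or~(B) when the first choice $x_i$ satisfies neither. Suppose every $x_i\in\inn(\beta)$ has $i\in\e-supp(\alpha)\setminus\inn(\alpha)$. If some such $i$ lies in $\supp(\gamma_k)$ for a cycle $\gamma_k$ of $\alpha$, I apply an extension of the Going up and down Lemma~\ref{up n down} to $\gamma_k$: the extension transfers the single-cycle statement to the larger equation $\alpha\beta=\beta\delta$, because the cycles $\gamma_\ell$ ($\ell\neq k$) commute with every generator whose index lies in $\supp(\gamma_k)$ (disjoint supports have index differences $\geq 2$), so they can be transported as commuting factors through the divisibility chains of the original proof. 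This produces $j\in\inn(\beta)\cap\inn(\gamma_k)\subseteq\inn(\alpha)$, returning us to Scenario~(B). Otherwise every $i\in\inn(\beta)$ lies in $\e-supp(\gamma_k)\setminus\supp(\gamma_k)$, so $i$ sits in a gap immediately adjacent to a cycle support; I then apply the Back and forth Lemma (its appropriate part~a, b, or~c, according to whether $i$ neighbors a single cycle or a nondistant pair $\gamma_k\prec\gamma_{k+1}$), again extended to carry unaffected cycles along. This extracts a long $D$- or $U$-prefix from $\beta$ and yields a strictly shorter equation $\alpha'\beta'''=\beta'''\delta$; crucially $\alpha'\in\mathcal{LSB}_n$, because the affected cycles get replaced by their suspensions, and the closing remark of the proof of Back and forth explicitly records that such shifted concatenations remain literally simple.

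The principal obstacle is verifying these two extensions from a single cycle (or a single adjacent pair) to the full $\alpha=\gamma_1\cdots\gamma_r$. The core observation is that any two disjoint cycles commute in $\mathcal{MB}_n$, so the cycles not engaged in the current divisibility chain travel through the proof unaltered; everything else is a careful but routine adaptation of the case analyses already carried out for the single-cycle and two-cycle versions. Once both extensions are in place, the induction closes. Finally, for part~(b), applying $\Rev$ to $\beta\alpha=\delta\beta$ gives $\Rev(\alpha)\Rev(\beta)=\Rev(\beta)\Rev(\delta)$ with $\Rev(\alpha)\in\mathcal{LSB}_n$, so part~(a) yields $\Rev(\delta)\in\mathcal{LSB}_n$ and therefore $\delta\in\mathcal{LSB}_n$.
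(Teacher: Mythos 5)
Your proposal is correct and follows essentially the same route as the paper: induction on $|\beta|$, with the same three-way case split on where an element of $\inn(\beta)$ sits relative to the supports and extended supports of the cycles of $\alpha$, the same use of the Going up and down and Back and forth Lemmas (transported through the commuting disjoint cycles), and the same reduction of part (b) to part (a) via $\Rev$. The extension of the two lemmas from a single cycle to the full product, which you flag as the main remaining work, is exactly what the paper also handles by observing that the uninvolved cycles commute with the relevant generators.
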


\begin{remark}a) The two parts of Proposition \ref{a and b} are equivalent: $\beta\alpha=\delta\beta$  implies $\Rev(\alpha)\Rev(\beta)=\Rev(\beta\alpha)=\Rev(\delta\beta)=\Rev(\beta)\Rev(\delta)$ with $\Rev(\alpha)$ literally simple; from a) we obtain $\Rev(\delta)\in\mathcal{LSB}_{n}$, hence $\delta\in\mathcal{LSB}_{n}$.\\
\indent b) If $\alpha,\alpha' \in \mathcal{LSB}_{n}$, are conjugate and Proposition \ref{a and b} a) is true for $\alpha$, then it is true for $\alpha'$ too: if $\alpha'\beta=\beta\delta$ and $\alpha\gamma=\gamma\alpha'$, then $\alpha(\gamma\beta)=\gamma\alpha'\beta=(\gamma\beta)\delta$ and Proposition \ref{a and b} a) for $\alpha$ implies $\delta\in\mathcal{LSB}_{n}$. If $\varepsilon\alpha=\alpha'\varepsilon$, then $\varepsilon\alpha\varepsilon^{-1}\beta=\beta\delta$, hence $\alpha(\varepsilon^{-1}\beta)=(\varepsilon^{-1}\beta)\delta$. Multiplying both sides with a big power $\Delta^{2k}$ we obtain a positive braid $\beta'=\varepsilon^{-1}\beta\Delta^{2k}$ and $\alpha\beta'=\beta'\delta$, therefore $\delta\in\mathcal{LSB}_{n}$.
\end{remark}

\emph{Proof of Proposition \ref{a and b}} \emph{a)}  We use a double induction on the length of $\alpha$ and on the length of $\beta$. If $|\alpha|\leq 1$, then $\alpha\beta=\beta\delta$ implies $|\delta|\leq 1$, so $\delta\in\mathcal{LSB}_{n}$. Now we start induction on $|\beta|$ (the case $|\beta|=0$ is obvious). We will discuss three cases, the first trivial, the second a simple consequence of gud Lemma, the third a consequence of baf Lemma. We put $\alpha=\gamma_{1}\gamma_{2}\ldots\gamma_{s}$ (as an increasing product of disjoint cycles).\\
\indent \emph{Case 1:} there is index $k\in\inn(\beta)\setminus \bigcup^{s}\limits_{i=1}\e-supp(\gamma_{i})$. In this case $x_{k}$ commutes with all $\gamma_{i}$ and $\beta=x_{k}\beta'$: hypothesis $x_{k}\alpha\beta'=\alpha(x_{k}\beta')=(x_{k}\beta')\delta$ implies $\alpha\beta'=\beta'\delta$, $|\beta'|<|\beta|$ and inductive step gives $\delta \in \mathcal{LSB}_{n}$.\\
\indent \emph{Case 2:} there is an index $k\in \inn(\beta)\bigcap\supp(\alpha)$. In this case, using gud Lemma, one can find an index $j\in\inn(\beta)\bigcap\inn(\gamma_{i})$, $\gamma_{i}=x_{j}\gamma'_{i}$ with $\gamma'_{i}\in\mathcal{LSB}_{n}$, $\beta=x_{j}\beta'$;
$$x_{j}(\gamma_{1}\ldots\gamma'_{i}\ldots \gamma_{s})(x_{j}\beta')=(\gamma_{1}\ldots\gamma_{i}\ldots\gamma_{s})(x_{j}\beta')=(x_{j}\beta')\delta$$ implies $(\gamma_{1}\ldots\gamma'_{i}\ldots\gamma_{s}x_{j})\beta'=\beta'\delta$. The new braid $\alpha'=(\gamma_{1}\ldots\gamma'_{i}\ldots\gamma_{s}x_{j})$ is literally simple ($x_{j}$ was deleted from some place  in $\gamma_{i}$ next added, at another place), $|\alpha'|=|\alpha|$,  $|\beta'|<|\beta|$, and again inductive step gives $\delta\in \mathcal{LSB}_{n}$.

\emph{Case 3:} there is an index $k$ in $\inn(\alpha)$ and also on the boundary $\bigcup^{s}\limits_{i=1}[\e-supp(\gamma_{i})\setminus\supp(\gamma_{i})]$. We have three subcases:

3.1) there is an index $i$ such that $\supp(\gamma_{i})=[b,c-1]$ and $\gamma_{i+1}$ is distant from $\gamma_{i}$ (or simply $i=s$). We can apply baf Lemma a) because $x_{b},\ldots x_{c-1},x_{c}$ commute with factors $\gamma_{j}$, $j\neq i$ and we obtain from
$\alpha\beta=(\gamma_{1}\ldots\gamma_{i}\ldots \gamma_{s})D(c,b)\beta'$$=D(c,b)\beta'\delta$ the equality $(\gamma_{1}\ldots\Sigma(\gamma_{i})\ldots \gamma_{s})\beta'=\beta'\delta$ with $\gamma_{1}\ldots\Sigma(\gamma_{i})\ldots \gamma_{s}$ literally simple and $|\beta'|<|\beta|$.

3.2) there is an index $i+1$ such that $\supp(\gamma_{i+1})=[c+1,e]$ and $\gamma_{i}$ is distant from
 $\gamma_{i+1}$ (or $i+1=1$). Baf Lemma b) gives, as in previous case $(\gamma_{1}\ldots\Sigma^{-1}(\gamma_{i+1})\ldots \gamma_{s})\beta''=\beta''\delta$ where $|\beta''|<|\beta|$ and $(\gamma_{1}\ldots\Sigma^{-1}(\gamma_{i+1})\ldots \gamma_{s})\in \mathcal{LSB}_{n}$.

3.3) there is an index $i$ such that $\supp(\gamma_{i})=[b,c-1]$, $\supp(\gamma_{i+1})=[c+1,e]$. The third part of baf Lemma implies $(\gamma_{1}\ldots\Sigma^{-c+b-1}(\gamma_{i+1})\Sigma^{e-c+1}(\gamma_{i})\ldots \gamma_{s})\beta'''=\beta'''\delta$, where the length of $\beta'''$ is smaller than $|\beta|.$\begin{flushright} $\Box$ \end{flushright}
\begin{cor}\label{l=c}$\mathcal{LSB}_{n}=\mathcal{CSB}_{n}$.
\end{cor}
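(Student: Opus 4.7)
The inclusion $\mathcal{CSB}_n \subseteq \mathcal{LSB}_n$ has already been established in the corollary following Lemma \ref{crit:conj sq fre}, so the task is to prove the reverse inclusion $\mathcal{LSB}_n \subseteq \mathcal{CSB}_n$. The plan is to observe that this is essentially an immediate consequence of Proposition \ref{a and b} together with Garside's theorem on positive conjugation.

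Fix $\alpha \in \mathcal{LSB}_n$ and let $\beta'$ be any positive braid conjugate to $\alpha$ in $\mathcal{B}_n$. By Garside's theorem (recalled in the Introduction), conjugation of positive braids in $\mathcal{B}_n$ is equivalent to conjugation in the monoid $\mathcal{MB}_n$, so there exists a positive braid $\delta$ with $\delta \alpha = \beta' \delta$. Applying Proposition \ref{a and b} b) to this equation (with $\alpha \in \mathcal{LSB}_n$) yields $\beta' \in \mathcal{LSB}_n$. Since $\mathcal{LSB}_n \subseteq \Div(\Delta_n)$ by the first item of the Proposition in Section~2, and every divisor of $\Delta_n$ is square free (as recalled in the Introduction, divisors of $\Delta_n$ coincide with square free positive braids), it follows that $\beta'$ is square free. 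As this holds for every positive conjugate $\beta'$ of $\alpha$, we conclude $\alpha \in \mathcal{CSB}_n$.

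There is no real obstacle here, since all the heavy lifting has been done in the gud and baf Lemmas that feed into Proposition \ref{a and b}. The only thing worth being explicit about is the appeal to Garside's reduction of $\mathcal{B}_n$-conjugacy among positive braids to $\mathcal{MB}_n$-conjugacy, which ensures that Proposition \ref{a and b} (phrased in terms of positive conjugators $\delta$) applies to every positive conjugate of $\alpha$. Combining with the earlier inclusion $\mathcal{CSB}_n \subseteq \mathcal{LSB}_n$ completes the proof of the corollary.
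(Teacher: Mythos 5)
Your proof is correct and is essentially the argument the paper intends: the corollary is stated as an immediate consequence of Proposition \ref{a and b} (which supplies $\mathcal{LSB}_{n}\subseteq\mathcal{CSB}_{n}$ via Garside's reduction of conjugacy to positive conjugation and the fact that literally simple braids divide $\Delta_{n}$ and are therefore square free), combined with the earlier corollary to Lemma \ref{crit:conj sq fre} giving the reverse inclusion. Your write-up just makes explicit the routine steps the paper leaves implicit.
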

\section{\textbf{The invariant simple set}}

If $(A_{i})_{i\in I}\subseteq \mathcal{MB}_{n}$  are invariant under conjugation, then $\bigcup\limits_{i\in I}A_{i}$ is also invariant under conjugation  and this explains the definition of $\mathcal{ISB}_{n}$.
The definition of conjugate simple braids implies  the inclusion $\mathcal{CSB}_{n}\subseteq \mathcal{ISB}_{n}$. The reverse inclusion  is also a direct consequence of this definition:
\begin{lemma}If $\alpha \in \Div(\Delta_{n})\setminus \mathcal{LSB}_{n}$, then there are positive braids  $\beta\in\mathcal{MB}_{n}$ and $\alpha' \in \mathcal{MB}_{n}\setminus \Div(\Delta_{n})$ such that $\alpha\beta=\beta\alpha'$.
\end{lemma}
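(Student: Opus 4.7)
The plan is to derive this lemma as a direct consequence of Corollary \ref{l=c} together with the Garside facts recalled in the Introduction; no further geometric or combinatorial analysis is needed.

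First I would unpack the hypothesis. Given $\alpha \in \Div(\Delta_{n}) \setminus \mathcal{LSB}_{n}$, Corollary \ref{l=c} (the equality $\mathcal{LSB}_{n}=\mathcal{CSB}_{n}$) yields $\alpha \notin \mathcal{CSB}_{n}$. Reading Definition \ref{conj:sq free} contrapositively, this means there exists a positive braid $\alpha'$ which is conjugate to $\alpha$ in $\mathcal{B}_{n}$ and which is \emph{not} square free. Since the set of square free positive braids in $\mathcal{MB}_{n}$ coincides with $\Div(\Delta_{n})$ (the basic Garside characterization quoted in the Introduction), this gives exactly $\alpha' \in \mathcal{MB}_{n} \setminus \Div(\Delta_{n})$, which is the element $\alpha'$ required by the statement.

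It remains to realize the conjugation between $\alpha$ and $\alpha'$ by a positive braid. Here I invoke the second Garside fact recalled in the Introduction: conjugation of positive braids inside $\mathcal{B}_{n}$ is equivalent to conjugation in the monoid $\mathcal{MB}_{n}$, meaning that whenever two elements of $\mathcal{MB}_{n}$ are conjugate in the group, one can find a positive braid $\beta$ satisfying $\alpha\beta = \beta\alpha'$. Applying this to our $\alpha$ and $\alpha'$ produces the required $\beta \in \mathcal{MB}_{n}$, and the lemma is proved.

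There is essentially no obstacle in this argument — it is a bookkeeping step whose whole content has already been built into Corollary \ref{l=c} and the Garside preliminaries; the only thing to watch is the logical direction, namely that "not conjugate simple" supplies a positive conjugate that fails to be square free (rather than merely some group-theoretic conjugate), which is ensured by the wording of Definition \ref{conj:sq free} itself. The lemma then functions exactly as advertised in the paragraph above it: it rules out any element of $\Div(\Delta_{n}) \setminus \mathcal{LSB}_{n}$ from belonging to a conjugation-invariant subset of $\Div(\Delta_{n})$, and hence establishes $\mathcal{ISB}_{n} \subseteq \mathcal{LSB}_{n}$.
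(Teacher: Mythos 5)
Your argument is correct and is essentially the paper's own proof: the authors likewise pass from $\alpha\notin\mathcal{LSB}_{n}$ to $\alpha\notin\mathcal{CSB}_{n}$ via Corollary \ref{l=c}, extract a positive non-square-free conjugate $\alpha'\in\mathcal{MB}_{n}\setminus\Div(\Delta_{n})$ from Definition \ref{conj:sq free}, and note that the conjugating element $\beta$ can be taken positive by Garside's result. Nothing is missing.
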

\begin{proof}If $\alpha$ is not in $\mathcal{LSB}_{n}$, then $\alpha$ is not in $\mathcal{CSB}_{n}$, hence there is  a conjugate $\alpha'=\beta^{-1}\alpha\beta\in\mathcal{MB}_{n}\setminus\Div(\Delta_{n})$, and $\beta$ can be chosen to be positive.
\end{proof}
\begin{cor}\label{l=c=i}$\mathcal{LSB}_{n}=\mathcal{CSB}_{n}=\mathcal{ISB}_{n}$.
\end{cor}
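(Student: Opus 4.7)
My plan is to leverage Corollary \ref{l=c} (which already gives $\mathcal{LSB}_n=\mathcal{CSB}_n$) together with the preceding Lemma, so that only two inclusions remain: $\mathcal{CSB}_n\subseteq \mathcal{ISB}_n$ and $\mathcal{ISB}_n\subseteq \mathcal{LSB}_n$.

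For the first inclusion, I would verify directly that $\mathcal{CSB}_n$ qualifies as one of the sets appearing in the union that defines $\mathcal{ISB}_n$. Any $\beta\in\mathcal{CSB}_n$ is square free, so $\beta\in\Div(\Delta_n)$, giving $\mathcal{CSB}_n\subseteq\Div(\Delta_n)$. Invariance under conjugation is immediate from Definition \ref{conj:sq free}: if $\beta\in\mathcal{CSB}_n$ and $\alpha\beta\alpha^{-1}\in\mathcal{MB}_n$ for some $\alpha\in\mathcal{B}_n$, then every positive braid conjugate to $\alpha\beta\alpha^{-1}$ is also positive conjugate to $\beta$, hence square free, so $\alpha\beta\alpha^{-1}\in\mathcal{CSB}_n$. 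Because $\mathcal{ISB}_n$ is by definition the union of \emph{all} such subsets, we conclude $\mathcal{CSB}_n\subseteq\mathcal{ISB}_n$.

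For the reverse inclusion $\mathcal{ISB}_n\subseteq\mathcal{LSB}_n$, I would argue by contradiction using the Lemma above. Suppose $\alpha\in\mathcal{ISB}_n$ but $\alpha\notin\mathcal{LSB}_n$. Since $\mathcal{ISB}_n\subseteq\Div(\Delta_n)$, we have $\alpha\in\Div(\Delta_n)\setminus\mathcal{LSB}_n$, so the Lemma supplies a positive braid $\beta$ and $\alpha'\in\mathcal{MB}_n\setminus\Div(\Delta_n)$ with $\alpha\beta=\beta\alpha'$, i.e.\ $\alpha'=\beta^{-1}\alpha\beta$ is a positive conjugate of $\alpha$. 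Invariance of $\mathcal{ISB}_n$ under conjugation then forces $\alpha'\in\mathcal{ISB}_n\subseteq\Div(\Delta_n)$, contradicting $\alpha'\notin\Div(\Delta_n)$.

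Chaining these with Corollary \ref{l=c} yields the triple equality $\mathcal{LSB}_n=\mathcal{CSB}_n=\mathcal{ISB}_n$. There is no real obstacle here: all of the combinatorial heavy lifting (the gud and baf Lemmas, and Proposition \ref{a and b}) was already consumed to establish Corollary \ref{l=c} and the preceding Lemma, so the present corollary is essentially a bookkeeping step reconciling the extremal definition of $\mathcal{ISB}_n$ with the concrete characterization from the earlier sections.
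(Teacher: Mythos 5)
Your proof is correct and follows essentially the same route as the paper: $\mathcal{CSB}_{n}\subseteq\mathcal{ISB}_{n}$ because $\mathcal{CSB}_{n}$ is itself a conjugation-invariant subset of $\Div(\Delta_{n})$, and the reverse inclusion comes from the preceding Lemma together with the conjugation invariance of $\mathcal{ISB}_{n}$, then everything is chained with Corollary \ref{l=c}. You have merely written out in full the details the paper leaves implicit.
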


Now we find the smallest positive braid of a conjugacy class containing (literally) simple braids.

\emph{Proof of Theorem \ref{canonical:conj sq fre}}  \emph{a)} From Lemma (\ref{crit:conj sq fre}), $\beta_{k_{1},j_{1}}$ commutes with $\beta_{k_{i},j_{i}}$ for $i\geq 3$.  If $j_{2}> k_{1}+1$, we can write $\beta_{K,J}=\beta_{k_{2},j_{2}}\beta_{k_{1},j_{1}}\ldots
\beta_{k_{s},j_{s}}$ (the same number of blocks). If $j_{2}=
k_{1}+1$, then conjugating with $\beta_{k_{1},j_{1}}$, we have
 $$\beta_{K,J}\thicksim (\beta_{k_{2},j_{2}}\beta_{k_{1},j_{1}})\ldots
\beta_{k_{s},j_{s}}= \beta_{k_{2},j_{1}}\ldots
\beta_{k_{s},j_{s}}\quad(\hbox{one}\,\beta\,\hbox{block less}).$$ Now repeat the process for the
pair $j_{i}$ and $k_{i-1}$. Finally we have $\beta_{K,J}
\sim\beta_{C,D} =\beta_{c_{1},d_{1}} \beta_{c_{2},d_{2}}\ldots
\beta_{c_{r},d_{r}}$, where $r\leq s,\,d_{i}\geq c_{i+1}+2$ and
$C_{\star},\,D_{\star}$ are decreasing
sequences. Now we conjugate $\beta_{C,D}$ in order to obtain a similar $\beta_{E,F}$ satisfying the same conditions
and also all differences $f_{i}-e_{i+1}$ are equal to 2 and the first
index $e_{1}$ is $n-1.$ If in $\beta_{C,D}$ we have a difference $d_{i-1}-c_{i}\geq 3$ or
the first letter is not $n-1$, then we can shift one step the block
$\beta_{c,d}=\beta_{c_{i},d_{i}}$ by conjugating with
$\beta_{c+1,d}$:
$$\beta_{c+1,d}\beta_{c+1,d+1}=\beta_{c,d}\beta_{c+1,d}$$
 Continue in this way until we have all differences
equal to 2. Taking conjugate with $\Delta_{n}:\, x_{n-i}\Delta_{n}=\Delta_{n}x_{i}$ we obtain
$\beta_{A}$ (but $A$ is not necessary in decreasing order).
If we have two consecutive blocks $\beta_{a,a+l}\beta_{b,b+m}$ and $m>l$ ( by the last step we have $b= a+l+2$), turn it into $\beta_{a,a+m}\beta_{b+m-l,b+m}$ by conjugating with appropriate shifts of $\Delta$.
\begin{eqnarray*}
   (\Sigma^{a-1}\Delta_{b-a+m+2})\beta_{a,a+l}\beta_{b,b+m}(\Sigma^{a-1}\Delta_{b-a+m+2})^{-1}&=& \beta_{b+m,b+m-l}\beta_{a+m,a}\\
   &=&\beta_{a+m,a}\beta_{b+m,b+m-l}.
\end{eqnarray*}

Now we conjugate separately the two blocks to put them in increasing order:

  $(\Sigma^{a-1}\Delta_{m+1}) (\Sigma^{b+m-l-1}\Delta_{l+1})\beta_{a+m,a}\beta_{b+m,b+m-l}(\Sigma^{b+m-l-1}\Delta_{l+1})^{-1} (\Sigma^{a-1}\Delta_{m+1})^{-1} $\\

    $=[(\Sigma^{a-1}\Delta_{m+1})\beta_{a+m,a} (\Sigma^{a-1}\Delta_{m+1})^{-1}] [(\Sigma^{b+m-l-1}\Delta_{l+1})\beta_{b+m,b+m-l} (\Sigma^{b+m-l-1}\Delta_{l+1})^{-1}]$

      $= \beta_{a,a+m}\beta_{b+m-l,b+m}.$

\emph{b)} If $\beta_{A}\sim\beta_{A'}$ then $\widehat{\beta}_{A}$ is
equivalent to $\widehat{\beta}_{A'}$ as links in a solid torus
$\mathbb{T}$. Let us denote $s_{h}=a_{1}+a_{2}+\ldots+a_{h}$ ($s_{0}=0$).  The
link $\widehat{\beta}_{A}$ has $r$ components given by the
$r$-blocks $\{\widehat{x_{s_{j-1}+1}\ldots x_{s_{j}}}\}_{j=1,\ldots,r}$   plus $n-s_{r}$
  components given by trivial strands $s_{r}+3,\ldots,n.$  The trivial components of $\widehat{\beta}_{A}$ give
the generator  $t$ of $H_{1}(\mathbb{T})$ and the non trivial
 components $\widehat{x_{s_{j-1}+1}\ldots x_{s_{j}-1}}$  give the cycle
 $a_{j}t$ in $H_{1}(\mathbb{T})$. The homology classes of the link components are isotopy invariants of link in the solid
 torus and the proof is finished. \begin{flushright} $\Box$ \end{flushright}
\emph{Proof of Corollary \ref{cor:diagram}} Remark that the natural section $s:\,\Sigma_{n}\longrightarrow\Div(\Delta_{n})$ is a bijective partial group homomorphism: if $\alpha,\beta\in\Sigma_{n}$ have images satisfying $s(\alpha)s(\beta)\in\Div(\Delta_{n})$, then $s(\alpha\beta)=s(\alpha)s(\beta)$.\\
\indent Theorem \ref{canonical:conj sq fre} gives canonical forms for conjugacy classes of simple braids and these are in bijection (induced by $\pi$) with conjugacy classes of the symmetric group.
\begin{flushright}
$\Box$
\end{flushright}
\section{\textbf{Markov simple braids}}
\begin{lemma} If $\gamma_{1}\prec\gamma_{2}\prec \ldots \prec\gamma_{r}$ are disjoint cycles, then $\beta=\gamma_{1}\gamma_{2}\ldots\gamma_{r}$ is a Markov simple braid.
\end{lemma}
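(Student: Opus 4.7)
The plan is to strengthen the claim: I will show by induction on the length $s$ of a MI/MII$_+$-sequence $\beta=\beta_{1},\beta_{2},\ldots,\beta_{s}=\beta'$ that every $\beta_{i}$ is literally simple in the appropriate $\mathcal{MB}_{m_{i}}$. Because $\mathcal{LSB}_{m}\subseteq\Div(\Delta_{m})$ coincides with the set of square-free positive braids in $\mathcal{MB}_{m}$, Markov simplicity of $\beta$ follows at once.

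The base case is built in: by Proposition \ref{| prop:unique representation of cycle}, $\beta=\gamma_{1}\gamma_{2}\ldots\gamma_{r}$ is literally simple by construction (concatenate simple words for the $\gamma_{h}$, whose supports are pairwise disjoint). For the inductive step, assume $\beta_{i}\in\mathcal{LSB}_{m}$ and analyse the three admissible transitions to $\beta_{i+1}$. An MI move, i.e.\ conjugation of $\beta_{i}$ by some element of $\mathcal{B}_{m}$ producing a positive braid, stays in $\mathcal{LSB}_{m}$ by Corollary \ref{l=c=i} together with the conjugation-invariance built into the very definition of $\mathcal{ISB}_{m}$. An MII$_+$ stabilization $\beta_{i+1}=\beta_{i}x_{m}$ stays literally simple because one can append the fresh letter $x_{m}$ to any simple word representing $\beta_{i}$ in $\mathcal{MF}_{m-1}$, producing a simple word in $\mathcal{MF}_{m}$ for $\beta_{i+1}$.

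The real work lies in the MII$_+$ destabilization $\beta_{i}=\beta_{i+1}x_{m-1}$ with $\beta_{i+1}\in\mathcal{MB}_{m-1}$. Here $\beta_{i+1}$ is a left divisor of $\beta_{i}\in\mathcal{LSB}_{m}$, so the divisor-closure property (part (2) of the opening Proposition of Section 2) places $\beta_{i+1}$ already in $\mathcal{LSB}_{m}$. To conclude that $\beta_{i+1}\in\mathcal{LSB}_{m-1}$, I would invoke the standard observation that the set of generator-indices occurring in a positive presentation of a monoid element is an invariant of that element (neither the commutations $x_{i}x_{j}=x_{j}x_{i}$ nor the braid relations $x_{i+1}x_{i}x_{i+1}=x_{i}x_{i+1}x_{i}$ introduce a new letter). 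Since $\beta_{i+1}$ admits at least one presentation using only $x_{1},\ldots,x_{m-2}$, the same is true of its simple-word presentation, which therefore lies in $\mathcal{MF}_{m-2}$ and certifies $\beta_{i+1}\in\mathcal{LSB}_{m-1}$.

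The main obstacle is exactly this destabilization case: it is tempting to argue only that a divisor of a square-free braid is square free, but the induction actually requires the stronger literal simplicity, because the next move in the sequence could again be a conjugation or stabilization whose control rests on Corollary \ref{l=c=i}. The divisor-closure of $\mathcal{LSB}$ supplies exactly this stronger conclusion, and the small bookkeeping issue of the ambient monoid (being in $\mathcal{LSB}_{m}$ versus $\mathcal{LSB}_{m-1}$) is settled by the letter-invariance just recalled, so the inductive invariant propagates through every Markov step.
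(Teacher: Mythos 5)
Your proof is correct and follows essentially the same route as the paper's: the paper also propagates the invariant ``literally simple $=$ conjugate simple'' along the Markov chain, handling $MI$ via conjugation-invariance and $MII_{+}$ via the letter-level description of literally simple braids. Your treatment of the destabilization step (divisor-closure of $\mathcal{LSB}$ plus invariance of the set of letters occurring in a positive presentation) merely spells out what the paper compresses into one sentence.
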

\begin{proof}The braid $\beta$ is literally simple=conjugate simple; in a Markov chain $\beta=\beta_{1}\rightarrow\beta_{2}\rightarrow\ldots\rightarrow\beta_{s}=\beta'$ a move $MI$ $\beta_{i}\rightarrow\beta_{i+1}$ transforms a conjugate simple braid into a conjugate simple braid and a move $MII_{+}$ transforms a literally simple braid into a literally simple braid (and also a change in the diagram of a literally simple braid preserves simplicity).
\end{proof}
\begin{lemma}If $\beta$ is Markov simple braid, then $\beta\in \mathcal{LSB}_{n}$.
\end{lemma}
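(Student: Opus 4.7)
The plan is to reduce this directly to the already established equality $\mathcal{CSB}_{n}=\mathcal{LSB}_{n}$ of Corollary \ref{l=c}. The main observation is that Markov simplicity, although defined using both $MI$ and $MII_{+}$ moves, is already more restrictive than conjugate simplicity as soon as one is allowed to use $MI$.

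Concretely, suppose $\beta\in\mathcal{MSB}_{n}$ and let $\beta'$ be an arbitrary positive braid conjugate to $\beta$ inside $\mathcal{B}_{n}$. Then the two-term sequence $\beta,\beta'$ is a legitimate Markov chain in the sense of Definition \ref{markov:sqfree}: both braids are positive, they live in the same $\mathcal{B}_{n}$, and the transition is a single $MI$ move. Markov simplicity of $\beta$ therefore forces $\beta'$ to be square free. Since $\beta'$ was an arbitrary positive conjugate of $\beta$, this is exactly the defining property of $\mathcal{CSB}_{n}$, so $\beta\in\mathcal{CSB}_{n}$.

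Invoking Corollary \ref{l=c} we conclude $\beta\in\mathcal{LSB}_{n}$, which is the desired inclusion. Note that the $MII_{+}$ moves are not used at all in this direction, consistent with the intuition that $MII_{+}$ is needed only to link $\mathcal{MSB}_{n}$ with the geometric set $\mathcal{GSB}_{n}$ and to prove the reverse containment $\mathcal{LSB}_{n}\subseteq\mathcal{MSB}_{n}$. There is essentially no obstacle here: the entire content is the observation that using $MI$-chains of length two already suffices to reproduce the condition defining conjugate simple braids, after which the previous corollary closes the argument.
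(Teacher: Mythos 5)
Your proof is correct and is essentially the paper's own argument: the paper disposes of this lemma in one line by noting that a Markov simple braid is conjugate simple (since any positive conjugate is reachable by a single $MI$ move) and then using $\mathcal{CSB}_{n}=\mathcal{LSB}_{n}$. You have merely spelled out the same reduction in more detail.
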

\begin{proof}If $\beta$ is Markov simple then $\beta$ is conjugate simple=literally simple.
\end{proof}
\begin{cor}\label{l=c=i=m}$\mathcal{LSB}_{n}=\mathcal{CSB}_{n}=\mathcal{ISB}_{n}=\mathcal{MSB}_{n}$.
\end{cor}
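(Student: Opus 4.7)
The plan is to assemble Corollary \ref{l=c=i=m} from three ingredients that are already in hand: the earlier Corollary \ref{l=c=i}, which establishes $\mathcal{LSB}_{n}=\mathcal{CSB}_{n}=\mathcal{ISB}_{n}$, and the two lemmas immediately preceding the corollary, which give the two containments $\mathcal{LSB}_{n}\subseteq\mathcal{MSB}_{n}$ and $\mathcal{MSB}_{n}\subseteq\mathcal{LSB}_{n}$. So the only work is to chain the inclusions cleanly.

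First I would invoke Proposition \ref{| prop:unique representation of cycle} to write an arbitrary $\beta\in\mathcal{LSB}_{n}$ as a product $\gamma_{1}\gamma_{2}\ldots\gamma_{r}$ of an increasing sequence of disjoint cycles. The first lemma above then says that this $\beta$ is Markov simple, so $\mathcal{LSB}_{n}\subseteq\mathcal{MSB}_{n}$. For the reverse direction, if $\beta\in\mathcal{MSB}_{n}$, then in particular $\beta$ is square free and remains so under every $MI$-conjugation within $\mathcal{B}_{n}$; this is exactly the definition of a conjugate simple braid, so $\mathcal{MSB}_{n}\subseteq\mathcal{CSB}_{n}$, and by Corollary \ref{l=c=i} we have $\mathcal{CSB}_{n}=\mathcal{LSB}_{n}$. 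This gives the second inclusion $\mathcal{MSB}_{n}\subseteq\mathcal{LSB}_{n}$.

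Combining these two inclusions with Corollary \ref{l=c=i} yields the chain of equalities $\mathcal{LSB}_{n}=\mathcal{CSB}_{n}=\mathcal{ISB}_{n}=\mathcal{MSB}_{n}$, which is precisely the statement of the corollary.

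There is no real obstacle in this proof since both directions are already isolated as lemmas; the only subtlety to double-check is that in passing from $\beta\in\mathcal{MSB}_{n}$ to $\beta\in\mathcal{CSB}_{n}$ one uses only $MI$-moves (not $MII_{+}$), so the number of strands is preserved, and the square-free conclusion in Definition \ref{conj:sq free} is exactly what $MI$-invariance of square-freeness provides. Thus the proof is essentially a two-line citation of the preceding lemmas together with Corollary \ref{l=c=i}.
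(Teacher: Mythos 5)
Your proposal is correct and follows essentially the same route as the paper: the two preceding lemmas give $\mathcal{LSB}_{n}\subseteq\mathcal{MSB}_{n}$ (via the disjoint-cycle decomposition of Proposition \ref{| prop:unique representation of cycle}) and $\mathcal{MSB}_{n}\subseteq\mathcal{CSB}_{n}=\mathcal{LSB}_{n}$ (since $MI$-moves realize exactly the positive conjugates), and chaining these with Corollary \ref{l=c=i} yields the statement. Your remark that only $MI$-moves are needed for the reverse inclusion is exactly the point the paper relies on.
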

\section{\textbf{Geometrically simple braids}}
Canonical form of the conjugacy classes in Theorem \ref{canonical:conj sq fre}  shows that $\mathcal{CSB}_{n}\subseteq \mathcal{GSB}_{n}$.

\begin{lemma}\label{diagram}If the closure $\widehat{\beta}$ of the positive $n$-braid $\beta$ is a trivial $c$-link ($c\geq 2$ components), then the diagram of $\widehat{\beta}$ has  $c$ seperated components.
\end{lemma}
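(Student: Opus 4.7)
The plan is to use the linking number as a positive-braid invariant to rule out any inter-component crossings in the diagram of $\widehat{\beta}$. Label the components $L_{1},\ldots,L_{c}$ of $\widehat{\beta}$, so that each $L_{p}$ is the union of the strands indexed by one cycle of the underlying permutation of $\beta$.

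First I would recall that in the standard diagram of the closure of a positive braid, every crossing is positive (sign $+1$), since each generator $x_{i}$ contributes a single positive crossing and the closure arcs introduce none. Then, for any pair $p \neq q$, the linking number $\mathrm{lk}(L_{p},L_{q})$ equals one half of the sum of the signs of the crossings in the diagram involving one strand of $L_{p}$ and one strand of $L_{q}$. Because all these inter-component crossings are $+1$, one has
\begin{equation*}
\mathrm{lk}(L_{p},L_{q}) \;=\; \tfrac{1}{2}\,\#\{\text{crossings between } L_{p} \text{ and } L_{q}\} \;\geq\; 0,
\end{equation*}
with equality if and only if there are no such crossings.

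Next I would invoke the hypothesis that $\widehat{\beta}$ is a trivial $c$-component link. Linking numbers are ambient isotopy invariants, and every pair of components of a trivial link has linking number zero. Hence $\mathrm{lk}(L_{p},L_{q})=0$ for all $p\neq q$, and by the above equality this forces the number of crossings between $L_{p}$ and $L_{q}$ to be zero.

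Finally, since no two distinct components cross in the standard braid diagram, the diagram decomposes as the disjoint union of the $c$ sub-diagrams corresponding to the individual components $L_{1},\ldots,L_{c}$, which is exactly the statement that the diagram has $c$ separated components. The main point to be careful about is simply that positivity of the braid forces each inter-component crossing to contribute positively and never cancel, so that vanishing linking numbers genuinely imply absence of such crossings; no further analysis of Markov moves or reductions is required.
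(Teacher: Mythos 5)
Your argument is correct and is essentially identical to the paper's: both use that all crossings in the closed positive braid diagram lie in the braid part, have the same sign, and hence contribute monotonically to the pairwise linking numbers, which must vanish for a trivial link. The only cosmetic difference is the sign convention (you take each crossing as $+1$ where the paper writes a $-\tfrac{1}{2}$ contribution), which does not affect the argument.
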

\begin{proof}Let us suppose that in the diagram of the closure of the braid $\beta$ there are two non separated components, $C_{1}$, $C_{2}$; this implies that there are crossings between $C_{1}$ and $C_{2}$, and these crossings should be in the braid diagram (the threads added to close the braid have no crossing). The braid $\beta$ is positive, hence every crossing has a $-\frac{1}{2}$ contribution to the linking number $lk(C_{1},C_{2})$, but this is zero.
\end{proof}
In~\cite{Barbu:Rehana} a Laurent polynomial invariant of oriented links $D$ is introduced, a new specialization of HOMFLY polynomial: $(l,m)\mapsto(s,-2)$, with skein relation $$sD(L_{+})+s^{-1}D(L_{-})-2D(L_{0})=0$$
and expansion formula of the closure of the $n$-braid $\beta=x_{i_{1}}^{a_{1}}\ldots x_{i_{k}}^{a_{k}}$ given by \\ $D_{n}(.\,.,a_{j},.\,.)(s)=(1-a_{j})s^{a_{j}}D_{n}(.\,.,a_{j-1},0,a_{j+1},.\,.)+a_{j}s^{a_{j}-1}D_{n}(.\,.,a_{j-1},1,a_{j+1},.\,.).$
\begin{proposition}\label{Geo:trivial}Suppose that $\beta$ is a positive braid, $\beta\in\mathcal{MB}_{n}$ with a maximal support $\supp(\beta)=[1,n-1]$.\\
a) If $\deg(\beta)=n-1$, then $D_{n}(\beta)=1$:\;\\
b) If $\deg(\beta)\geq n$, then $D_{n}(\beta)$ is a polynomial in $s$ and $0$ is one of its roots.
\end{proposition}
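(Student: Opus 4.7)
For part~(a), the hypotheses $\supp(\beta)=[1,n-1]$ and $\deg(\beta)=n-1$ force each generator $x_i$ to appear exactly once in any positive presentation of $\beta$, so $\beta\in\mathcal{LSB}_n$. Since $\supp(\beta)$ is a single interval, Theorem~\ref{canonical:conj sq fre} reduces the canonical form of its conjugacy class to a single block, so $\beta$ is conjugate to $x_1x_2\cdots x_{n-1}$. The induced permutation is an $n$-cycle, hence $\widehat{\beta}$ has one component; by Theorem~\ref{main: th} this closure is an unknot, and $D_n(\beta)=1$.

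For part~(b), the plan is to induct on $\deg(\beta)$. First I arrange a positive presentation with some exponent $a_j\ge 2$: if $\beta$ and every positive conjugate were square-free, Corollary~\ref{l=c} would force $\beta\in\mathcal{LSB}_n$, hence $\deg(\beta)=|\supp(\beta)|=n-1$, contradicting $\deg(\beta)\ge n$. Since $D_n$ depends only on $\widehat{\beta}$, I replace $\beta$ by an appropriate positive conjugate.

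Apply the expansion formula at position $j$:
\[
D_n(\beta)=(1-a_j)s^{a_j}D_n(\beta')+a_j s^{a_j-1}D_n(\beta''),
\]
where $\deg(\beta'')=\deg(\beta)-(a_j-1)<\deg(\beta)$ and $\supp(\beta'')$ agrees with the support of the (conjugated) $\beta$ since $x_{i_j}$ is retained. Using part~(a) when $\deg(\beta'')=n-1$ and the inductive hypothesis otherwise, $D_n(\beta'')$ is a polynomial in $s$; multiplied by $a_js^{a_j-1}$ with $a_j-1\ge 1$, the second term is a polynomial in $s$ divisible by $s$. For the first term, $\beta'$ has $\deg(\beta')=\deg(\beta)-a_j$ and support possibly strictly smaller; using the HOMFLY disjoint-union identity $D(L\sqcup\mathrm{unknot})=\tfrac{s+s^{-1}}{2}D(L)$ one writes $D_n(\beta')$ as a Laurent polynomial whose minimal $s$-degree is controlled by the number of trivial strands and of connected components of $\supp(\beta')$. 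A case analysis based on whether $x_{i_j}$ persists in $\beta'$ and whether $\supp(\beta')$ splits shows that this minimal degree is at least $-(a_j-1)$, so $s^{a_j}D_n(\beta')$ is a polynomial divisible by $s$, and summing the two terms completes the inductive step.

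The main obstacle is this last bookkeeping: when $\supp(\beta')$ disconnects, extra factors of $\tfrac{s+s^{-1}}{2}$ accumulate (each contributing $s^{-1}$ at lowest order), and one must verify that the exponent $a_j\ge 2$ is always large enough to absorb them. A subsidiary induction—on the number of connected components of $\supp(\beta')$, together with the proposition applied recursively to each piece in its own braid group—should resolve this and close the argument. Ensuring that the conjugation step in the second paragraph can be carried out while keeping the data needed for this recursion under control is a compatible technical point.
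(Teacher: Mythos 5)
Your strategy is the paper's: part (a) via the canonical conjugacy form, part (b) via the expansion formula applied at a squared letter produced, if necessary, by conjugation. Two points, however. In part (a) you must not invoke Theorem \ref{main: th}: Proposition \ref{Geo:trivial} is itself an ingredient of that theorem's proof (it yields Lemma \ref{c:b}, hence $\mathcal{GSB}_{n}\subseteq\mathcal{LSB}_{n}$), so citing it here is circular. You only need the harmless observation that the closure of $x_{1}x_{2}\ldots x_{n-1}$ is visibly an unknot (the paper quotes the direct computation $D_{n}(x_{1}\ldots x_{n-1})=D(\bigcirc)=1$ from the companion paper), and that inclusion $\mathcal{CSB}_n\subseteq\mathcal{GSB}_n$ is in any case established before this Proposition.

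In part (b) the ``main obstacle'' you describe does not exist, and the step you leave with ``should resolve this'' closes immediately. The expansion deletes a single syllable $x_{i_{j}}^{a_{j}}$, so $\supp(\beta_{1}\beta_{2})$ is either all of $[1,n-1]$ or $[1,n-1]\setminus\{i_{j}\}$; in the latter case the closure splits into exactly two pieces, each with full support in its own braid group (one possibly trivial when $i_{j}\in\{1,n-1\}$), and exactly one factor $\frac{s^{2}+1}{2s}$ appears: $D_{n}(\beta_{1}\beta_{2})=\frac{s^{2}+1}{2s}D_{i_{j}}(\gamma_{1})D_{n-i_{j}}(\gamma_{2})$. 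There is no accumulation of such factors, and the single $s^{-1}$ is absorbed by $s^{a_{j}}$ with $a_{j}\geq 2$. Each piece has degree strictly smaller than $\deg(\beta)$, so your induction on degree handles them provided you state the induction uniformly in $n$ (the paper instead runs a triple induction on $n$, on the number of syllables, and on the degree; the extra induction on syllable number is what lets it treat the all-exponents-one case by passing to a conjugate with fewer syllables, whereas you expand the conjugate directly and still drop the degree --- both work). Finally, you and the paper both tacitly assume that the square-creating conjugation preserves $\supp=[1,n-1]$; this holds because the conjugations in Lemma \ref{crit:conj sq fre} are cyclic permutations of the word together with braid and commutation moves, all of which preserve the set of letters occurring, but it deserves a sentence. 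With these repairs your proof coincides with the paper's.
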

\begin{proof} a) The first part is a consequence of the following facts:\\
\indent\emph{a1)} $\beta$ is a literally simple braid;\\
\indent\emph{a2)} $\beta$ is conjugate to $x_{1}\ldots x_{n-1}$ (Theorem \ref{canonical:conj sq fre});\\
\indent\emph{a3)} $D_{n}(\beta)=D_{n}(x_{1}\ldots x_{n-1})=D(\bigcirc)=1$ (see \cite {Barbu:Rehana} Corollary 5.6 for a general formula).\\
\indent \emph{b)} The second part is proved by a triple induction; on $n$, on the factor length $k$ (the number of distinct factors of $\beta$), and on $\deg(\beta)$.\\
\indent In $\mathcal{MB}_{2}$, $D_{2}(x_{1}^{n})(s)=\frac{1}{2}[(1-n)s^{n+1}+(1+n)s^{n-1}]$ (see \cite{Barbu:Rehana}, Example 4.3), therefore for $n=2$ the claim is true. Now consider a positive braid $\beta=x_{i_{1}}^{a_{1}}\ldots x_{i_{k}}^{a_{k}}\in \mathcal{MB}_{n}$, all exponents are $\geq 1$ (and $i_{h}\neq i_{h+1}$). The support of $\beta$ contains all indices and $\deg(\beta)=\sum^{k}\limits_{i=1}a_{i}\geq n$, therefore $k\geq n-1$.\\
\indent We want to prove the claim for $k=n-1$. After a conjugation (cyclic permutation of factors) we can suppose that $\beta=x_{i_{1}}^{a_{1}}\ldots x_{i_{n-2}}^{a_{n-2}}x_{n-1}^{a_{n-1}}=\beta_{0}x_{n-1}^{a}$ with $\beta_{0}\in\mathcal{MB}_{n-1}$ and $\supp(\beta_{0})=[1,n-2]$. If $a=1$, then $\beta=\beta_{0}x_{n-1},\,\deg(\beta_{0})\geq n-1$, and also $D_{n}(\beta)=D_{n-1}(\beta_{0})$; induction on $n$ shows that $D_{n-1}(\beta_{0})$ is a polynomial in $s$ and $D_{n-1}(\beta_{0})(0)=0$. If $a\geq 2$, the expansion formula (in the last position) gives
\begin{eqnarray*}
  D_{n}(\beta)(s) &=& (1-a)s^{a}D_{n}(\beta_{0})+a s^{a-1}D_{n}(\beta_{0}x_{n-1})\\
   &=& (1-a)s^{a}\frac{s^{2}+1}{2s}D_{n-1}(\beta_{0})+a s^{a-1}D_{n-1}(\beta_{0}),
\end{eqnarray*}
where $D_{n-1}(\beta_{0})$ is a polynomial (possibly constant=1), therefore $D_{n}(\beta)$ is also a polynomial without constant term.

Now suppose $k\geq n$. If one of the exponents $a_{j}$ is $\geq 2$, we reduce the degree:
$$D_{n}(\beta)=D_{n}(\beta x_{i_{j}}^{a_{j}}\beta_{2})=(1-a_{j})s^{a_{j}}D_{n}(\beta_{1}\beta_{2})+a_{j}s^{a_{j}-1}D_{n}(\beta_{1}x_{i_{j}}\beta_{2})$$
\indent If $\supp(\beta_{1}\beta_{2})=[1,n-1]$, then $D_{n}(\beta_{1}\beta_{2})$ and $D_{n}(\beta_{1}x_{i_{j}}\beta_{2})$ are polynomials  and $D_{n}(\beta)(0)=0$. Suppose that $\supp(\beta_{1}\beta_{2})\subsetneq [1,n-1]$. If $i_{j}=n-1$ (or 1), then $D_{n}(\beta_{1}\beta_{2})=\frac{s^{2}+1}{2s}D_{n-1}(\beta_{1}\beta_{2})$ and $\supp(\beta_{1}\beta_{2})=[1,n-2]$ (in the case $i_{j}=1$, after a conjugation with Garside braid), and again $D_{n}(\beta)$ is a polynomial with zero constant term. In the case $i=i_{j}\in\{2,3,\ldots,n-2\}$, $\widehat{\beta_{1}\beta_{2}}$ has two separated components, each of them are closures of positive braids $\gamma_{1}\in\mathcal{MB}_{i}$ and $\gamma_{2}\in\Sigma ^{i-1}\mathcal{MB}_{n-i}$ respectively, with $\supp(\gamma_{1})=[1,i-1]$, $\supp(\Sigma ^{-i
+1}\gamma_{2})=[1,n-i-1]$, and $D_{n}(\beta_{1}\beta_{2})=\frac{s^{2}+1}{2s}D_{i}(\gamma_{1})D_{n-i}(\gamma_{2})$ . The second term $D_{n}(\beta_{1}x_{i}\beta_{2})$ is a polynomial (possibly 1) because $\supp(\beta_{1}x_{i_{j}}\beta_{2})=[1,n-1]$, therefore  in this case also $D_{n}(\beta)$ is a polynomial in $s$, equal to $0$ for $s=0$.

The last case is when all the exponents $a_{i}=1$. As degree of $\beta$ is $\geq n$, $\beta$ cannot be literally simple, therefore $\beta$ has a (positive) conjugate $\beta'$ containing exponents $\geq 2$; because $\supp(\beta')=\supp(\beta)=[1,n-1]$, factor length $(\beta')<$ factor length$(\beta)$, the inductive hypothesis (on $k$) implies the result.
\end{proof}

\begin{lemma}\label{c:b}If the closure $\widehat{\beta}$ of the positive $n$-braid $\beta$ is a trivial knot, then $\beta$ is literally simple.
\end{lemma}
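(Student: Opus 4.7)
The plan is to reduce the hypothesis to a support condition, then apply the polynomial invariant $D$ from Proposition \ref{Geo:trivial} to force the degree of $\beta$ down to the literally simple range.

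First I would dispose of the trivial $n=1$ case (the empty braid is literally simple) and then assume $n \geq 2$. The key preliminary reduction is that $\supp(\beta) = [1,n-1]$. Indeed, if some generator $x_i$ is missing from $\beta$, then the induced permutation $\pi(\beta)\in \Sigma_n$ lies in $\Sigma_i\times \Sigma_{n-i}$, so it has at least two cycles, whence $\widehat{\beta}$ has at least two components, contradicting the assumption that $\widehat{\beta}$ is a (connected) knot. So from now on the support is maximal, and in particular $\deg(\beta)\geq n-1$ since each of the $n-1$ generators must appear at least once.

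Next I would split on $\deg(\beta)$. If $\deg(\beta)=n-1$, then each $x_i$ appears exactly once in $\beta$, so $\beta$ is literally simple by definition, and we are done. The real work is to rule out $\deg(\beta)\geq n$. Here I would invoke Proposition \ref{Geo:trivial} b): because $\supp(\beta)=[1,n-1]$ and $\deg(\beta)\geq n$, the invariant $D_n(\beta)$ is a polynomial in $s$ vanishing at $0$. On the other hand, since $\widehat{\beta}$ is the unknot, $D_n(\beta) = D(\bigcirc) = 1$, which does not vanish at $0$. This contradiction forces $\deg(\beta) = n-1$, completing the argument.

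The main (and really the only) obstacle is the support reduction step, because one has to be careful with the convention: a positive braid factoring as $\beta_1\beta_2$ with $\supp(\beta_1)\subset[1,i-1]$ and $\supp(\beta_2)\subset[i+1,n-1]$ does not literally split the braid diagram, but it does force the induced permutation to be block-diagonal, which is what is really needed for the cycle count of $\widehat{\beta}$. Everything else is a direct application of Proposition \ref{Geo:trivial}, together with the normalization $D(\bigcirc)=1$ recorded in the proof of part a) of that proposition.
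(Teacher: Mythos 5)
Your argument is correct and is essentially the paper's own proof: reduce to $\supp(\beta)=[1,n-1]$ (the paper states this maximality without elaboration; your cycle-count justification via $\pi(\beta)$ is the intended one), then use $D(\widehat\beta)=D(\bigcirc)=1$ together with Proposition \ref{Geo:trivial} to exclude $\deg(\beta)\geq n$ and conclude $\deg(\beta)=n-1$, hence literal simplicity. No substantive differences.
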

\begin{proof}If $\widehat{\beta}$ is a knot, the support of $\beta$ should be maximal: $\supp(\beta)=[1,n-1]$. If $\widehat{\beta}$ is a trivial knot, $D_{n}(\beta)=D(\widehat{\beta})=1$ and Proposition \ref{Geo:trivial} implies $\deg(\beta)=n-1$, therefore $\beta$ is literally simple.
\end{proof}
\emph{Proof of Theorem \ref{main: th}}\indent From Corollary \ref{l=c}, \ref{l=c=i} and \ref{l=c=i=m}, it is enough to show $\mathcal{LSB}_{n}=\mathcal{CSB}_{n}=\mathcal{GSB}_{n}$.
If $\beta$ is geometrically simple braid, Lemma  \ref{diagram} implies that $\beta=\beta_{1}\beta_{2}\ldots \beta_{c}$ with disjoint supports  and any two of $\supp(\beta_{i})$ not consecutive. Each closure $\widehat{\beta_{i}}$ is a trivial knot and Lemma \ref{c:b} implies that each $\beta_{i}$ is literally simple, therefore $\beta$ is literally simple.
\begin{flushright}
$\Box$
\end{flushright}

\section{\textbf{Appendix}}
In this section we consider only positive braids: we compute the left least  common multiple $(l.c.m_{L})$ of a generator $x_{i}$ and of the very simple braid, $U(a,b)$ and $D(c,d)$ respectively. The simplest case appears in Garside~\cite{Garside:69}:
\begin{lemma}\label{garside}(\textbf{Garside}) Suppose that $x_{i},x_{j}\in\Div_{L}(\beta)$:\\
a) if $|i-j|\geq 2$, then $x_{i}x_{j}=x_{j}x_{i}|_{L}\beta$;\\
b) if $i+1=j$, then $x_{i}x_{i+1}x_{i}=x_{i+1}x_{i}x_{i+1}|_{L}\beta.$
\end{lemma}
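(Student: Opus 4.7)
The plan is to reduce the statement to the lattice-theoretic structure of the positive braid monoid $\mathcal{MB}_n$, which Garside established. The key fact is that $\mathcal{MB}_n$ is cancellative on both sides and admits least common multiples: any two positive braids sharing a common left multiple have a unique left l.c.m.\ which left-divides every common left multiple. My task is to identify this l.c.m.\ in the two cases and then lift the information from $x_i,x_j$ dividing $\beta$ to the asserted longer words dividing $\beta$.

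First I would check that the proposed expressions are common left multiples. In case (a), the commutation relation $x_ix_j=x_jx_i$ (valid since $|i-j|\geq 2$) shows that both $x_i$ and $x_j$ left-divide $x_ix_j$. In case (b), the braid relation $x_ix_{i+1}x_i = x_{i+1}x_ix_{i+1}$ shows that $x_i$ left-divides the left-hand side and $x_{i+1}$ left-divides the right-hand side, and these are the same element of $\mathcal{MB}_n$. So in both cases we have explicit common left multiples; the content of the lemma is minimality.

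The technical heart is the following \emph{cube/confluence lemma} of Garside: if $x_i\alpha = x_j\beta$ in $\mathcal{MB}_n$ with $i\neq j$, then there exists $\gamma\in\mathcal{MB}_n$ such that either $|i-j|\geq 2$, in which case $\alpha = x_j\gamma$ and $\beta = x_i\gamma$; or $|i-j|=1$, in which case $\alpha = x_jx_i\gamma$ and $\beta = x_ix_j\gamma$. Granting this, the lemma is immediate: writing $\beta = x_i\beta_1 = x_j\beta_2$, the confluence lemma yields $\gamma$ with $\beta_1 = x_j\gamma$ (case a) and hence $\beta = x_ix_j\gamma$, or $\beta_1 = x_{i+1}x_i\gamma$ (case b) and hence $\beta = x_ix_{i+1}x_i\gamma$, which is exactly what we want.

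The main obstacle is proving the confluence lemma itself. The standard route is induction on the common length $|x_i\alpha|=|x_j\beta|$, combined with cancellativity of $\mathcal{MB}_n$ and a case analysis on the first generator of $\alpha$ (or $\beta$): one applies an elementary transformation (commutation or braid relation) to push an occurrence of $x_j$ to the front of $x_i\alpha$, and matches it against $x_j\beta$ using left-cancellation to reduce the length. Since the lemma is explicitly attributed to Garside~\cite{Garside:69} in the statement, I would simply invoke this classical result; in a self-contained presentation one would spell out the induction, where the delicate point is the subcase $|i-j|=1$, in which one must carefully track how a braid relation splits $\alpha$ as $x_jx_i\gamma$ rather than merely $x_j\gamma$.
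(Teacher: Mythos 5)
Your proposal is correct: the paper itself gives no proof of this lemma, stating it as a known result of Garside with a citation to \cite{Garside:69}, which is exactly what you ultimately do. Your sketch of the reduction to the confluence (cube) lemma and the identification of $x_ix_j$ resp.\ $x_ix_{i+1}x_i$ as the left l.c.m.'s is the standard argument and is accurate.
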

\begin{lemma}\label{extended garside}a) If $x_{i}x_{i+1},x_{i+2}\in \Div_{L}(\beta)$, then $x_{i}x_{i+1}(x_{i+2}x_{i+1})=x_{i+2}(x_{i}x_{i+1}x_{i+2})|_{L}\beta;$\\
b) if $x_{i+1}x_{i+2},x_{i}\in\Div_{L}(\beta)$, then $x_{i}(x_{i+1}x_{i}x_{i+2}x_{i+1})=x_{i+1}x_{i+2}(x_{i}x_{i+1}x_{i+2})|_{L}\beta$;\\
c) if $x_{i+2}x_{i+1},x_{i}\in\Div_{L}(\beta)$, then $x_{i}(x_{i+2}x_{i+1}x_{i})=x_{i+2}x_{i+1}x_{i}(x_{i+1})|_{L}\beta$;\\
d) if $x_{i+1}x_{i},x_{i+2}\in\Div_{L}\beta$, then $x_{i+1}x_{i}(x_{i+2}x_{i+1}x_{i})=x_{i+2}(x_{i+1}x_{i}x_{i+2}x_{i+1})|_{L}\beta.$
\end{lemma}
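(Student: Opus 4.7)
The strategy in all four parts is the same: first confirm that the two expressions claimed equal in $\mathcal{MB}_n$ are indeed equal, by two or three applications of the defining relations $x_jx_k=x_kx_j$ (for $|j-k|\geq 2$) and $x_jx_{j+1}x_j=x_{j+1}x_jx_{j+1}$, and then deduce the divisibility by iteratively extracting longer and longer left divisors of $\beta$ using Lemma \ref{garside}. Each iteration writes $\beta=(\text{prefix})\cdot\gamma$, translates the remaining hypotheses into divisibility conditions on $\gamma$ via left cancellation, and applies Lemma \ref{garside} again.

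As a template, consider part (a). From $x_ix_{i+1}|_L\beta$ and $x_{i+2}|_L\beta$ one has in particular $x_i,x_{i+2}|_L\beta$, so Lemma \ref{garside}(a) yields $x_ix_{i+2}|_L\beta$. Left cancellation gives $\beta=x_ix_{i+2}\gamma$ for some positive $\gamma$. Reducing the hypothesis $x_ix_{i+1}|_L\beta$ modulo this prefix yields $x_{i+1}|_Lx_{i+2}\gamma$; combined with the trivial $x_{i+2}|_Lx_{i+2}\gamma$, Lemma \ref{garside}(b) gives $x_{i+1}x_{i+2}x_{i+1}|_Lx_{i+2}\gamma$, hence $x_{i+1}x_{i+2}|_L\gamma$. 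Writing $\gamma=x_{i+1}x_{i+2}\delta$ and using the equality $x_ix_{i+2}x_{i+1}x_{i+2}=x_{i+2}x_ix_{i+1}x_{i+2}$ established in the first step closes the case.

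Parts (b), (c), (d) follow the same recipe, with one or two additional iterations. For instance, in part (b), from $x_i,x_{i+1}|_L\beta$ Lemma \ref{garside}(b) gives $x_{i+1}x_ix_{i+1}|_L\beta$, so $\beta=x_{i+1}x_ix_{i+1}\gamma$; the hypothesis $x_{i+1}x_{i+2}|_L\beta$ then reduces to $x_{i+2}|_Lx_ix_{i+1}\gamma$, which unfolds via two more applications of Lemma \ref{garside} (first part (a) with $x_i,x_{i+2}$, then part (b) with $x_{i+1},x_{i+2}$) to $x_{i+2}x_{i+1}|_L\gamma$; substituting back and applying the equality from Step 1 finishes the argument. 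Parts (c) and (d) are analogous, using the braid relation to pre-extract $x_{i+2}x_ix_{i+1}x_i$ or $x_{i+1}x_{i+2}x_{i+1}$ respectively and then reducing. The main obstacle is purely combinatorial bookkeeping: at each stage one must choose the correct pair of generators to feed into Garside's Lemma and track how the remaining divisibility conditions simplify after each left cancellation. There is no genuine mathematical difficulty beyond this, since each application strictly lengthens the prefix extracted and the braid monoid is cancellative.
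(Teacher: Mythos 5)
Your proposal is correct and follows essentially the same route as the paper: extract a common left multiple of the two hypothesized divisors via Garside's Lemma, left-cancel the shared prefix, and apply Garside's Lemma again to the quotient (the only cosmetic difference is that in part (b) you inline two further applications of Lemma \ref{garside} where the paper simply invokes part (a) of the extended lemma). All the intermediate divisibility claims and word identities you state check out, granting left cancellativity of $\mathcal{MB}_n$, which the paper also uses implicitly.
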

\begin{proof}Case  a): Garside Lemma a) implies that $x_{i}x_{i+1}\beta'=\beta=x_{i}x_{i+2}\beta^{''}$, therefore $x_{i+2}|_{L}x_{i+1}\beta'$, and the case b) of the Lemma implies that $\beta=x_{i}(x_{i+1}x_{i+2}x_{i+1})\beta^{'''}$.\\
\indent Case b): Garside Lemma b) implies $x_{i+1}x_{i+2}\beta'=\beta=x_{i+1}x_{i}x_{i+1}\beta^{'''}$, therefore $x_{i}x_{i+1}$ and $x_{i+2}$ are left divisors of $x_{i+2}\beta'$; case a) of this Lemma gives the result.\\
\indent Case c) and d) can be checked in a similar way.
\end{proof}
Using Lemma \ref{garside} and Lemma \ref{extended garside} one can start  an induction to prove the next results (or one can find a proof in \cite{Barbu:usman}):
\begin{proposition}\label{l.c.m ,U}Suppose that $x_{i},U(a,b)\in \Div_{L}(\beta)$ ($a+1\leq b$). We have the following implications:\\
a) if $i\notin\e-supp U(a,b)$, then $x_{i}U(a,b)=U(a,b)x_{i}|_{L}\beta$;\\
b) if $i=a-1$, then $x_{a-1}D(a,a-1)D(a+1,a)\ldots D(b,b-1)=U(a,b)U(a-1,b)|_{L}\beta$;\\
c) if $i=a$, then l.c.m$_{L}(x_{a},U(a,b))=U(a,b)$;\\
d) if $i\in[a+1,b]$, then $U(a,b)x_{i-1}=x_{i}U(a,b)|_{L}\beta$;\\
e) if $i=b+1$, then $U(a,b)D(b+1,b)=x_{b+1}U(a,b+1)|_{L}\beta.$
\end{proposition}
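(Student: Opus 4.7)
The proposition has two components in each of its five cases: a braid identity between two words, and the assertion that this common positive word divides $\beta$. I would separate the proof accordingly.

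For the braid identities, each can be verified directly from the Artin relations by a short manipulation. For instance, in (d), starting with $U(a,b)x_{i-1} = x_a\cdots x_b x_{i-1}$ and using that $x_{i-1}$ commutes with $x_{i+1},\ldots,x_b$, one pushes $x_{i-1}$ past them to obtain $x_a\cdots x_{i-2}(x_{i-1}x_ix_{i-1})x_{i+1}\cdots x_b$, applies the braid relation, and then commutes $x_i$ leftward past $x_a,\ldots,x_{i-2}$ to reach $x_iU(a,b)$. Cases (b) and (e) have similar but longer identities, which I would verify by an induction on $b-a$: the base case $b=a+1$ is essentially Lemma \ref{extended garside}, and the inductive step uses that $U(a,b)=U(a,b-1)x_b$ together with $x_bx_{b-1}$ passing through the tail of the word using only braid relations and commutations.

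For the divisibility claims, my plan is a simultaneous induction on the length $b-a+1$ of $U(a,b)$, covering all five parts jointly. The base $b=a$ is immediate: (a) is Lemma \ref{garside} a), (b) and (e) are both instances of Lemma \ref{garside} b), (c) is trivial, and (d) is vacuous. The length-two base $b=a+1$ is where Lemma \ref{extended garside} enters: its four sub-cases a)--d) correspond exactly to what is needed for Proposition \ref{l.c.m ,U} (e), (b), (d), and the mixed configuration arising in the induction. For the inductive step, assume the proposition for all shorter intervals. Part (a) follows by applying Garside Lemma a) $b-a+1$ times to move $x_i$ past each generator in $U(a,b)$ while staying within $\Div_L(\beta)$. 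Part (c) is trivial. Part (d) is handled by splitting $U(a,b)=U(a,i-1)\cdot x_i\cdot U(i+1,b)$ and using the induction hypothesis on the two shorter factors, together with the commutations that $x_{i-1}$ satisfies with everything in $U(i+1,b)$.

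The main obstacle, and what forces the induction to be simultaneous rather than sequential, is parts (b) and (e). The common multiple claimed there has length quadratic in $b-a$, so one cannot obtain it by a bounded number of Garside-type moves; one needs to extend an already-large expression by one more $D$-block at each induction step. In case (e), for instance, from $x_{b+1}, U(a,b)\in\Div_L(\beta)$, I would first invoke IH part (e) on the shorter pair $(x_{b+1},U(a+1,b))$ to get $U(a+1,b)D(b+1,b)=x_{b+1}U(a+1,b+1)\in\Div_L(\beta)$, and then recover the factor $x_a$ by noting that $x_a$ commutes with $x_{b+1}$ (since $b+1-a\ge 2$) and then applying IH part (b) with parameters $(a+1,b+1)$ and $i=a=a'-1$. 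The symmetric argument handles (b) by peeling off $x_a$ on the left and calling IH part (e) with new parameters. This cross-referencing between (b) and (e) through the inductive step is the delicate point: neither case closes on its own, but together they close with the help of the extended Garside Lemma at each step to absorb the new boundary generator via a braid relation rather than a mere commutation.
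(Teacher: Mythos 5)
Your overall strategy is exactly the one the paper intends: the paper offers no written proof of this proposition beyond the remark that ``using Lemma \ref{garside} and Lemma \ref{extended garside} one can start an induction'' (otherwise deferring to \cite{Barbu:usman}), and your simultaneous induction on $b-a+1$, with Lemma \ref{garside} and Lemma \ref{extended garside} as the base cases and a direct Artin-relation check of the braid identities, fills in that sketch. Parts (a), (c), (d) and the identities are fine as you describe them.

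There is, however, one step in your inductive argument for (e) that does not work as written. From $U(a,b)=x_aU(a+1,b)\in\Div_L(\beta)$ you cannot conclude $U(a+1,b)\in\Div_L(\beta)$ (already $x_2$ is not a left divisor of $x_1x_2$), so the induction hypothesis for the pair $(x_{b+1},U(a+1,b))$ cannot be invoked ``in $\beta$'' to get $x_{b+1}U(a+1,b+1)\in\Div_L(\beta)$. The repair is the one your own setup suggests: write $\beta=x_a\beta''$ with $\beta''=U(a+1,b)\beta'$; since $x_a$ and $x_{b+1}$ commute, Lemma \ref{garside} a) gives $x_ax_{b+1}|_L\beta$, hence $x_{b+1}|_L\beta''$, and the induction hypothesis applied inside $\beta''$ yields $D(b+1,b)|_L\beta'$, whence $U(a,b)D(b+1,b)|_L\beta$. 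Note that this already closes case (e), so the subsequent appeal to part (b) with parameters $(a+1,b+1)$ is not only unnecessary but would not deliver the target: its hypothesis $U(a+1,b+1)\in\Div_L(\beta)$ is again unavailable, and the common multiple $U(a+1,b+1)U(a,b+1)$ it produces is not left-divisible by the desired element $x_{b+1}U(a,b+1)$ (for $a=1,b=2$ one checks $x_3x_1x_2x_3=x_1x_2x_3x_2$ is not a left divisor of $x_2x_3x_1x_2x_3=x_1x_2x_3x_1x_2$, since $x_2\nmid_Lx_1x_2$). Case (b) is handled symmetrically --- most cleanly by conjugating with $\Delta_n$, which converts it into case (e) of Proposition \ref{l.c.m,D} --- so the two cases need not be interleaved in the way you describe.
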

\begin{proposition}\label{l.c.m,D}Suppose that $x_{i},D(c,d)\in \Div_{L}(\beta)\,(c\geq d+1)$. We have the following implications:\\
a) if $i\notin \e-suppD(c,d)$ then $x_{i}D(c,d)=D(c,d)x_{i}|_{L}\beta;$\\
b) if $i=d-1$, then $x_{d-1}D(c,d-1)=D(c,d)U(d-1,d)|_{L}\beta;$\\
c) if $i\in[d,c-1]$, then $x_{i}D(c,d)=D(c,d)x_{i+1}|_{L}\beta$;\\
d) if $i=c$, then l.c.m$_{L}(x_{c},D(c,d))=D(c,d)$;\\
e) if $i=c+1$, then $D(c,d)D(c+1,d)=x_{c+1}D(c,d)D(c+1,d+1)|_{L}\beta.$
\end{proposition}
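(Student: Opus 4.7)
The proposition is the $D$-analogue of Proposition~\ref{l.c.m ,U}, and I would prove it by the same strategy: induction on the length $\ell = c-d+1 \geq 2$ of $D(c,d)$, anchored in the two-letter case by Lemma~\ref{extended garside}. Each sub-case demands two distinct checks, a braid identity verifying that the two displayed expressions for the l.c.m.\ agree in $\mathcal{MB}_n$, and a divisibility statement asserting that this common element left-divides $\beta$.

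For the base case $\ell = 2$, i.e.\ $D(c,d) = x_{d+1}x_d$, the five sub-cases unfold as follows. Case (a) ($i \notin [d-1,d+2]$) is two iterations of Lemma~\ref{garside} a), since $x_i$ commutes with each of $x_{d+1}$ and $x_d$. Case (b) ($i = d-1$) is Lemma~\ref{extended garside} c) applied at index $d-1$, which simultaneously produces the braid identity $x_{d-1}D(d+1,d-1) = D(d+1,d)\,U(d-1,d)$ and the left-divisibility assertion. Case (c) collapses to $i = d$ and is precisely Lemma~\ref{garside} b). Case (d) is trivial since $x_c$ is the first letter of $D(c,d)$. Case (e) ($i = c+1 = d+2$) is Lemma~\ref{extended garside} d) at index $d$, giving $D(d+1,d)D(d+2,d) = x_{d+2}D(d+1,d)D(d+2,d+1) \mid_L \beta$.

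For the inductive step I would factor $D(c,d) = x_c \cdot D(c-1,d)$: from $x_i, D(c,d) \mid_L \beta$ one extracts that both $x_c$ and $D(c-1,d)$ left-divide $\beta$, and the inductive hypothesis applied to the shorter $D(c-1,d)$, combined with the already-settled two-letter interaction involving $x_c$, yields each of the five cases. An alternative, and perhaps cleaner, route is to invoke the Garside automorphism $\iota(\beta) = \Delta_n \beta \Delta_n^{-1}$, which acts on generators by $x_j \mapsto x_{n-j}$, preserves left-divisibility, and sends $U(a,b)$ to $D(n-a,n-b)$. Setting $(a,b) = (n-c,n-d)$ and replacing $i$ by $n-i$, the five cases of Proposition~\ref{l.c.m ,U} transport term-by-term onto Proposition~\ref{l.c.m,D} with the permutation (a)$\leftrightarrow$(a), (b)$\leftrightarrow$(e), (c)$\leftrightarrow$(d), (d)$\leftrightarrow$(c), (e)$\leftrightarrow$(b).

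The main obstacle is the bookkeeping in the boundary cases (b) and (e), where the new index lies in $\e-supp\,D(c,d) \setminus \supp D(c,d)$. In these situations one must combine a braid relation with a commutation in order to produce the extra suffix $U(d-1,d)$ or to pull $x_{c+1}$ to the front, all while tracking that the resulting lengthened word remains a left divisor of $\beta$. Once those boundary identities are absorbed into the two-letter step via Lemma~\ref{extended garside}, the remaining inductive reductions are routine, and the Garside duality argument reduces the whole proposition to a mechanical translation of the already-known Proposition~\ref{l.c.m ,U}.
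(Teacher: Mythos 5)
Your strategy is the one the paper itself indicates: it gives no written proof of Proposition~\ref{l.c.m,D}, saying only that ``using Lemma~\ref{garside} and Lemma~\ref{extended garside} one can start an induction'' (or consult \cite{Barbu:usman}), and your base case $c=d+1$ correctly identifies Lemma~\ref{extended garside}~c) and~d) as exactly cases~b) and~e) of the proposition, with a), c), d) following from Lemma~\ref{garside} and triviality. Your Garside-duality observation is also correct and is arguably the cleanest route: $\iota(\gamma)=\Delta_n\gamma\Delta_n^{-1}$ is a monoid automorphism sending $x_j$ to $x_{n-j}$ and $U(a,b)$ to $D(n-a,n-b)$, it preserves left-divisibility, and your case permutation (a)$\leftrightarrow$(a), (b)$\leftrightarrow$(e), (c)$\leftrightarrow$(d), (d)$\leftrightarrow$(c), (e)$\leftrightarrow$(b) checks out (the transported words differ from those displayed in Proposition~\ref{l.c.m,D} only by braid-equivalent rewritings, which is harmless since the l.c.m.\ is well defined). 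One correction to the inductive step as you phrased it: from $D(c,d)\mid_L\beta$ you may \emph{not} conclude that $D(c-1,d)\mid_L\beta$; indeed $\inn(D(c,d))=\{c\}$, so $x_{c-1}$ does not left-divide $\beta=x_cD(c-1,d)\beta'$ in general. The induction should instead be run on $\beta_1$ where $\beta=x_c\beta_1$ (or by peeling the letter $x_d$ off the other end, $D(c,d)=D(c,d+1)x_d$). Since your duality argument is self-contained given Proposition~\ref{l.c.m ,U}, this slip does not invalidate the proposal, but the direct induction needs that repair.
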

\begin{proposition}\label{leter:F}Given $\beta\in\mathcal{MB}_{n}$ and a cycle $\gamma$, $\supp(\gamma)=[b,e]$, we have the following implications:\\
\indent a) if $x_{b-1}|_{L}\gamma\beta$, then $x_{b-1}|_{L}\beta;$\\
\indent b)  if $x_{e+1}|_{L}\gamma\beta$, then $x_{e+1}|_{L}\beta.$
\end{proposition}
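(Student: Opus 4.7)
The plan is to prove part (a) by localising the analysis at the unique occurrence of $x_b$ in $\gamma$ and shuttling $x_{b-1}$ past the surrounding letters, all of which commute with it; part (b) is then the verbatim mirror, with $x_e$ replacing $x_b$.

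The main tool is the following commutation lemma, obtained by iterating Garside Lemma \ref{garside}(a): if $\delta\in\mathcal{MB}_n$ admits a positive presentation in which every letter $x_i$ satisfies $|i-k|\geq 2$, and $x_k|_L\delta M$ for some $M\in\mathcal{MB}_n$, then $x_k|_L M$. The proof is a straightforward induction on $|\delta|$: write $\delta=x_{i_1}\delta'$; both $x_{i_1}$ and $x_k$ are left divisors of $x_{i_1}\delta'M$, so Garside Lemma \ref{garside}(a) yields $x_{i_1}x_k=x_k x_{i_1}|_L x_{i_1}\delta'M$, whence left cancellation gives $x_k|_L\delta'M$, and the inductive hypothesis finishes the step.

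For part (a), since $\gamma$ is literally simple and $b\in\supp(\gamma)=[b,e]$, the generator $x_b$ occurs exactly once in any reduced positive presentation of $\gamma$; fix one and write $\gamma=Px_bS$, where every letter of $P$ and $S$ has index in $[b+1,e]$ and hence distance at least $2$ from $b-1$. The argument now proceeds in three stages. Stage one: the commutation lemma applied with $\delta=P$ and $k=b-1$ converts the hypothesis $x_{b-1}|_L\gamma\beta=Px_bS\beta$ into $x_{b-1}|_L x_bS\beta$. Stage two: both $x_{b-1}$ and $x_b$ are left divisors of $x_bS\beta$, so Garside Lemma \ref{garside}(b) gives $x_{b-1}x_bx_{b-1}=x_bx_{b-1}x_b|_L x_bS\beta$; left cancelling the initial $x_b$ produces $x_{b-1}x_b|_LS\beta$, and in particular $x_{b-1}|_LS\beta$. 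Stage three: a second application of the commutation lemma, this time with $\delta=S$, yields $x_{b-1}|_L\beta$, as required.

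Part (b) is proved by the same three-stage argument, using the unique occurrence of $x_e$: since $\gamma$ is literally simple and $e\in\supp(\gamma)$, one writes $\gamma=P'x_eS'$ with $P'$ and $S'$ composed of letters in $[b,e-1]$, all of which commute with $x_{e+1}$, and the argument goes through unchanged. The only non-routine point is the splitting $\gamma=Px_bS$ (resp.\ $P'x_eS'$), which rests on the fact that in a literally simple braid each letter of the support occurs exactly once; once this splitting is in hand, the rest is a mechanical combination of the commutation lemma with Garside Lemma \ref{garside}(b) at the middle letter.
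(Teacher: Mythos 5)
Your proof is correct and takes essentially the same route as the paper, whose own proof is just the one-line sketch ``induction on the length of $\gamma$ and Garside Lemma \ref{garside}'': your commutation lemma is that induction carried out on the prefix $P$ and suffix $S$, and the pivot step at the unique occurrence of $x_{b}$ (resp.\ $x_{e}$), using Garside Lemma \ref{garside} b) plus left cancellation, is the only non-commutative ingredient. The splitting $\gamma=Px_{b}S$ with all other letters in $[b+1,e]$ is justified exactly as you say, by the fact (noted at the start of Section 2) that every positive word representing a literally simple braid is a simple word.
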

\begin{proof}Induction on the length of $\gamma$ and Garside Lemma \ref{garside} give the result.
\end{proof}

\end{document}